\providecommand{\keywords}[1]
{
  \small	
  \textbf{\textit{Keywords---}} #1
}
\newcommand{\Ds}{{\rm D}_s}
\newcommand{\vett}[1]{\boldsymbol{#1}}
\newcommand{\Real}{\mathbb{R}}
\newtheorem{thm}{Theorem}
\newtheorem{cor}{Corollary}
\newtheorem{lem}{Lemma}
\def\eqalign#1{\null\,\vcenter{\openup\jot
   \ialign{\strut\hfil$\displaystyle{##}$&$
   \displaystyle{{}##}$\hfil \crcr#1\crcr}}\,}
\newcommand{\E}{\mathbb{E}}
\newcommand{\red}[1]{{{#1}}}  
\begin{document}
	\title{Designing to detect heteroscedasticity in a regression model}
	
	\author[1*]{Lanteri, A.}
	\author[2]{Leorato, S.}
	\author[3]{L\'opez-Fidalgo, J.}
	\author[4]{Tommasi, C.}
	\affil[1,2,4]{Department of Economics, Management and Quantitative Methods, University of Milan, Italy.}
	\affil[3]{Institute of Data Science and Artificial Intelligence, University of Navarre, Spain}
	\affil[ ]{ }
	\affil[*]{Corresponding author contacts: alessandro.lanteri@unimi.it 

	Via Conservatorio, 7. 20122 Milano, Italy.  }
 \date{}
	\maketitle

	\begin{abstract}
We consider the problem of designing experiments to detect the presence of a specified heteroscedastity in a non-linear Gaussian regression model. 
In this framework, we focus on the $\Ds$- and KL-criteria and study their relationship with the noncentrality parameter of the asymptotic chi-squared distribution of a likelihood-based test, for local alternatives. 
Specifically, we found that when the variance function depends just on one parameter, the two criteria coincide asymptotically and in particular, the ${\rm D}_1$-criterion is proportional to the noncentrality parameter. 
Differently, if the variance function depends on a vector of parameters, then the KL-optimum design converges to the design that maximizes the noncentrality parameter. 
Furthermore, we confirm our theoretical findings through a simulation study concerning the computation of asymptotic and exact powers of the log-likelihood ratio statistic.
	\end{abstract}	
	
\keywords{asymptotic power, heteroscedasticity, likelihood-based tests, noncentrality parameter, optimal discrimination designs}	
\section{Introduction}
In the literature of Optimal Design of Experiments, many papers concern  precise estimation of  the regression coefficients or some transformation of them; classical references are  \cite{Atkinson_etal2007}, \cite{Silvey1980} and \cite{Pazman1986}, among others. Much less attention is deserved to the estimation of the error variance function, because frequently the error term is assumed to be homoscedastic or with 
a heteroscedasticity derived from a variance function known up to a proportionality constant. 

	The goal of this paper is to design an experiment to detect a specific kind of heteroscedasticity in a non-linear Gaussian regression model, 
\begin{equation}
	y_i=\mu(\bm{x}_i;\bm{\beta})+\varepsilon_i,\; \varepsilon_i\sim N(0;\sigma^2 h(\bm{x}_i;\bm{\gamma})),\quad i=1,\ldots,n 
	\label{model}
\end{equation}
where $y_i$ is a response variable	with non-linear  mean function $\mu(\bm{x}_i;\bm{\beta})$ and  error variance $\sigma^2 h(\bm{x}_i;\bm{\gamma})$;  herein $\bm{\beta}\in \mathbb{R}^m$ is the vector of regression coefficients, $\sigma^2$ is an unknown constant and $h: \Real^p\times\Real^s\mapsto \Real_+$ is a continuous positive function that depends on  
a parameter vector $\bm{\gamma}\in \mathbb{R}^s$. Assume that there exists a value ${\bm{\gamma}}_0$ such that $h(\bm{x}_i;{\bm{\gamma}}_0)=1$; in other words,  ${\bm{\gamma}}={\bm{\gamma}}_0$ leads to the  homoscedastic model.
	
	In many practical problems, it may be convenient to test for the model heteroscedasticity with a likelihood-based test (log-likelihood ratio, score or Wald test), since  its asymptotic distribution is known to be a chi-squared random variable with $s$ degrees of freedom.  
	In particular, we consider local alternatives: 
	\begin{equation}
	\left\{ 
	\begin{array}{cl}
	H_0: & \bm{\gamma}={\bm{\gamma}}_0 \\
	H_1: & \bm{\gamma}={\bm{\gamma}}_0+\frac{\bm{\lambda}}{\sqrt{n}},\; \bm{\lambda}\neq\bm{0},  \\
	\end{array}	
		\right.
		\label{system}
	\end{equation}  
 with $\bm{\lambda}\in \mathbb{R}^s$, and we aim at designing an experiment with the goal of maximizing in some sense the (asymptotic) power  of a likelihood-based test. 
In this study, we justify the use of the $\Ds$- and the KL-criterion \citep{Fidalgo_etal2007} to optimally design an experiment with the inferential goal of testing hypotheses (\ref{system}). Some other authors who considered the inferential issue of hypotheses testing (but concerning the regression functions) are  \cite{STIGLER1971}, \cite{SPRUILL1990}, \cite{Dette_Titoff2009}, who used the $\Ds$-optimality and/or the $T$-criterion proposed by \cite{Atkinson_Fedorov1975}; see also Chapter 6 of  \cite{Fedorov1972}. 

Both the $\Ds$- and KL- criteria are related, in a different way, to the noncentrality parameter of the asymptotic chi-squared distribution of a likelihood-based test. We prove that the first order approximation of KL-criterion for discriminating between the homoscedastic and the heteroscedastic models coincides with the noncentrality parameter. 
Although the same result does not hold in general for the $\Ds$-optimality, the $\Ds$-criterion  is proportional to the noncentrality parameter when $\gamma$ is scalar.
Therefore, in the scalar case, the two criteria lead to the same optimal design asymptotically.

\red{The most relevant result is that the sequence of the KL-optimum designs are proved to converge to the design that maximizes the noncentrality parameter. Furthermore, the analytic expression of this limiting design (at which we expect to reach the maximum asymptotic power of the test) is also provided. } 

The paper is organized as follows. In Section \ref{sec:2}, with regard to problem  (\ref{system}), we give the expression of the noncentrality parameter for the chi-square  asymptotic distribution of a likelihood-based test. We also recall how the $\Ds$-criterion is related to the noncentralitity parameter and  we prove that, in this context, the $\Ds$-optimum design can be obtained as a $\rm D$-optimality solution.  
In Section \ref{sec:3}, we provide the closed-form  expression for the KL-optimal design, which is quite uncommon. \red{Furthermore, we prove that  the KL-criterion differs from the noncentrality parameter by a term that goes to zero uniformly; this implies that the sequence of the KL-optimum designs converges to the design maximizing the noncentrality parameter. } Section \ref{examples} presents three examples where we compute  the asymptotic power of the log-likelihood ratio statistic for several designs; in addition, the exact powers are computed through a simulation study. \red{These numerical outcomes confirm the theoretical findings.} Finally, in the Appendix we collect the proofs of all the theoretical results stated in the main body of this manuscript.
\section{$\Ds$-criterion and noncentrality parameter}
\label{sec:2}

Let $\bm{x}$ denote an experimental point that can vary in a compact experimental domain ${\cal X}\subseteq {\rm I\! R}^p$. Following \cite{Kiefer_Wolfowitz1959}, an approximate design
$\xi=\left\{
\begin{array}{ccc}
\bm{x}_1 & \ldots & \bm{x}_k\\
\xi(\bm{x}_1)& \ldots & \xi(\bm{x}_k)
\end{array}
\right\}
$  
is a discrete probability measure on ${\cal X}$.  \red{Let $\Xi$ denote the set of all possible designs.} A design is said \lq\lq optimal" if it maximizes some concave criterion function of $\xi\in \Xi$, which reflects the inferential goal.
The $\Ds$-criterion of optimality is commonly used for precise estimation of a subset of parameters. Herein, we are interested in estimating as precisely as possible the variance parameter vector $\bm{\gamma}$. The $\Ds$-optimum design maximizes the precision matrix of the maximum likelihood estimator of $\bm{\gamma}$.

From model (\ref{model}), the log-likelihood function for one observation at $\bm{x}$ is
\begin{equation*}
l(y; \bm{\beta},\sigma^2,\bm{\gamma},\bm{x}) = \left\{ -\frac{1}{2}\frac{[y-\mu(\bm{x};\bm{\beta})]^2}{\sigma^2 h(\bm{x};\bm{\gamma})} -\frac{1}{2}\left[\log(2\pi)+\log(\sigma^2)+\log(h(\bm{x};\bm{\gamma})) \right] \right\}.
\end{equation*}
The score vector, denoted by $\bm{u}=(\bm{u}_{\bm{\beta}}^T, u_{\sigma^2},  \bm{u}_{\bm{\gamma}}^T)^T$ is
\begin{align*}
\bm{u}_{\bm{\beta}} &= \frac{\partial l(y; \bm{\beta},\sigma^2,\bm{\gamma},\bm{x})}{\partial \bm{\beta}}
=
\frac{[y-\mu(\bm{x};\bm{\beta})]}{\sigma^2 h(\bm{x};\bm{\gamma})}\,
\nabla \mu(\bm{x};\bm{\beta}) \\
u_{\sigma^2} &= \frac{\partial l(y; \bm{\beta},\sigma^2,\bm{\gamma},\bm{x})}{\partial \sigma^2}= \frac{1}{2} \frac{[y-\mu(\bm{x};\bm{\beta})]^2}{\sigma^4\, h(\bm{x};\bm{\gamma})}-\frac{1}{2\sigma^2}  \\
\bm{u}_{\bm{\gamma}} &= \frac{\partial l(y; \bm{\beta},\sigma^2,\bm{\gamma},\bm{x})}{\partial \bm{\gamma}}
= \frac{1}{2} \frac{\nabla h(\bm{x};\bm{\gamma}) \, [y-\mu(\bm{x};\bm{\beta})]^2}{\sigma^2 h(\bm{x};\bm{\gamma})^2}-\frac{1}{2}\frac{\nabla h(\bm{x};\bm{\gamma})}{h(\bm{x};\bm{\gamma})} 
\end{align*}
where  
$\displaystyle\nabla \mu(\bm{x};\bm{\beta})=\frac{\partial\mu(\bm{x};\bm{\beta})}{\partial\bm{\beta} }$ and $\displaystyle\nabla h(\bm{x};\bm{\gamma})=\frac{\partial h(\bm{x};\bm{\gamma})}{\partial \bm{\gamma} }$.
After some algebra, we have that 
\begin{eqnarray*}
	\bm{J}(\bm{x};\bm{\beta}, \sigma^2,\bm{\gamma})&=&
	E[\bm{u}\bm{u}^T] \\
	&=& \begin{bmatrix} 
		\displaystyle \frac{\nabla \mu(\bm{x};\bm{\beta}) 
			\nabla \mu(\bm{x};\bm{\beta})^T}{\sigma^2\,h(\bm{x};\bm{\gamma})} & 0 & \bm{0}^T \\[.2cm]
		0 & \displaystyle\frac{1}{2\,\sigma^4} & \displaystyle\frac{\nabla h(\bm{x};\bm{\gamma})^T}{2\,\sigma^2 h(\bm{x};\bm{\gamma})} \\[.3cm]
		\bm{0} & \displaystyle\frac{\nabla h(\bm{x};\bm{\gamma})}{2\,\sigma^2 h(\bm{x};\bm{\gamma})}  & \displaystyle\frac{\nabla h(\bm{x};\bm{\gamma})  
			\nabla h(\bm{x};\bm{\gamma})^T}{2\,h(\bm{x};\bm{\gamma})^2}  
	\end{bmatrix}
\end{eqnarray*}  
where the expectation is taken with respect to the Normal law.

The Fisher Information matrix is 
$$
\mathcal{I}(\xi;\bm{\beta},\sigma^2, \bm{\gamma}) = \sum_{i=1}^k \bm{J}(\bm{x}_i;\bm{\beta}, \sigma^2,\bm{\gamma})\, \xi(\bm{x}_i)=
\begin{bmatrix}
\mathcal{I}_{11} & \mathcal{I}_{12} \\
\mathcal{I}_{12}^T & \mathcal{I}_{22}
\end{bmatrix}
$$	
where
$$
\mathcal{I}_{11}\!=\!\begin{bmatrix}
\bm{M}(\xi;\!\bm{\beta},\!\sigma^2,\! \bm{\gamma}) & 0  \\
0 & \displaystyle\frac{1}{2\,\sigma^4} 
\end{bmatrix}\!\!; \;\;
\bm{M}(\xi;\!\bm{\beta},\!\sigma^2,\! \bm{\gamma})\!=\!
\sum_{i=1}^k \! \displaystyle \frac{\nabla \mu(\bm{x}_i;\bm{\beta}) 
	\nabla \mu(\bm{x}_i;\bm{\beta})^T}{\sigma^2\,h(\bm{x}_i;\bm{\gamma})} \,\xi(\bm{x}_i)
$$
$$
\mathcal{I}_{12}=\begin{bmatrix}
\bm{0}^T \\
\displaystyle \sum_{i=1}^k\frac{\nabla h(\bm{x}_i;\bm{\gamma})^T}{2\,\sigma^2 h(\bm{x}_i;\bm{\gamma})} \, \xi(\bm{x}_i)
\end{bmatrix} 
;\quad
\mathcal{I}_{22}=
\displaystyle\sum_{i=1}^k 
\frac{\nabla h(\bm{x}_i;\bm{\gamma}) \nabla h(\bm{x}_i;\bm{\gamma})^T}{2\,h(\bm{x}_i;\bm{\gamma})^2}\, \xi(\bm{x}_i).
$$
The asymptotic covariance matrix of the MLE for $\bm{\gamma}$ is $[\mathcal{I}_{22.1}(\xi;\bm{\gamma})]^{-1}$, where 
$\mathcal{I}_{22.1}(\xi;\bm{\gamma})=
\mathcal{I}_{22} -\mathcal{I}_{12}^T \mathcal{I}_{11}^{-1} \mathcal{I}_{12}$ is the Schur complement of $\mathcal{I}_{22}$ in $\mathcal{I}(\xi;\bm{\beta},\sigma^2, \bm{\gamma})$.
The $\Ds$-optimum design for $\bm{\gamma}$ minimizes the generalized variance of the MLE for $\bm{\gamma}$:
\begin{equation}
\xi_{\Ds}=\arg\max_\xi |\mathcal{I}_{22.1}(\xi;\bm{\gamma})|.
\label{Ds_schur}
\end{equation}
The following theorem proves that the $\Ds$-optimality for $\bm{\gamma}$ actually is $\rm D$-optimality for an \lq\lq auxiliary'' linear regression. This result is quite useful from a practical point of view, because there is much statistical software to compute $\rm D$-optimum designs, which can be exploited to determine $\xi_{\Ds}$.
\begin{thm}
	\label{th:Ds_to_D_aux}
	The $\Ds$-optimum design for $\bm{\gamma}$ coincides with the $\rm D$-optimum design for the \lq\lq auxiliary'' linear regression model:
	$$y_i=\alpha_0+\bm{\alpha}^T \nabla \log h(\bm{x}_i;\tilde{\bm{\gamma}})+\varepsilon_i, \; \varepsilon_i\sim N(0;\sigma^2 ), \quad i=1,\ldots,n,
	$$ 
	where $\nabla \log h(\bm{x};\bm{\gamma})=\left(\frac{\partial \log h(\bm{x};\bm{\gamma})}{\partial \gamma_1},\ldots, \frac{\partial \log h(\bm{x};\bm{\gamma})}{\partial \gamma_s} \right)^T$ and $\tilde{\bm{\gamma}}$ is a nominal value for $\bm{\gamma}$.
\end{thm}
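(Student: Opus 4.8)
The plan is to evaluate the Schur complement in closed form and recognize it, up to a multiplicative constant, as the information matrix of the auxiliary model after its intercept has been profiled out. Throughout I would write $\bm{a}_i=\nabla\log h(\bm{x}_i;\bm{\gamma})=\nabla h(\bm{x}_i;\bm{\gamma})/h(\bm{x}_i;\bm{\gamma})$ and $\bar{\bm{a}}=\sum_{i=1}^k \bm{a}_i\,\xi(\bm{x}_i)$, and take the nominal value $\tilde{\bm{\gamma}}$ of the auxiliary model to equal the $\bm{\gamma}$ at which the Fisher information is evaluated.

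First I would exploit the block structure of $\mathcal{I}_{11}=\mathrm{diag}(\bm{M},\,1/(2\sigma^4))$. Because the $\bm{\beta}$-block of $\mathcal{I}_{12}$ vanishes, the correction term $\mathcal{I}_{12}^T\mathcal{I}_{11}^{-1}\mathcal{I}_{12}$ collapses to the contribution of the $\sigma^2$ coordinate alone: letting $\bm{b}=\tfrac{1}{2\sigma^2}\bar{\bm{a}}$ be the vector whose transpose is the nonzero ($\sigma^2$) row of $\mathcal{I}_{12}$ and $2\sigma^4$ the matching diagonal entry of $\mathcal{I}_{11}^{-1}$, one finds $\mathcal{I}_{12}^T\mathcal{I}_{11}^{-1}\mathcal{I}_{12}=2\sigma^4\,\bm{b}\bm{b}^T=\tfrac{1}{2}\bar{\bm{a}}\bar{\bm{a}}^T$. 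Since $\mathcal{I}_{22}=\tfrac{1}{2}\sum_i \bm{a}_i\bm{a}_i^T\,\xi(\bm{x}_i)$, this gives the key identity
\begin{equation*}
\mathcal{I}_{22.1}(\xi;\bm{\gamma})=\frac{1}{2}\Bigl[\sum_{i=1}^k \bm{a}_i\bm{a}_i^T\,\xi(\bm{x}_i)-\bar{\bm{a}}\,\bar{\bm{a}}^T\Bigr],
\end{equation*}
so that the Schur complement is exactly one half the design-weighted covariance matrix of $\nabla\log h$, the elimination of $\sigma^2$ producing the centering about $\bar{\bm{a}}$.

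Next I would write down the information matrix of the auxiliary linear model, whose regression vector is $\bm{f}(\bm{x}_i)=(1,\bm{a}_i^T)^T$. Dropping the common factor $1/\sigma^2$ (irrelevant for optimality), $\bm{M}_{\mathrm{aux}}(\xi)=\sum_i \bm{f}(\bm{x}_i)\bm{f}(\bm{x}_i)^T\xi(\bm{x}_i)$ is the bordered matrix with corner entry $1$, off-diagonal block $\bar{\bm{a}}$, and lower block $\sum_i \bm{a}_i\bm{a}_i^T\xi(\bm{x}_i)$. The block-determinant formula with the scalar corner then yields $|\bm{M}_{\mathrm{aux}}(\xi)|=\bigl|\sum_i \bm{a}_i\bm{a}_i^T\xi(\bm{x}_i)-\bar{\bm{a}}\bar{\bm{a}}^T\bigr|=2^{s}\,|\mathcal{I}_{22.1}(\xi;\bm{\gamma})|$, combining the identity above with $|cA|=c^s|A|$ for an $s\times s$ matrix. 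As $2^{s}$ is a positive constant independent of $\xi$, the two objectives in $\arg\max_\xi$ differ only by this factor, so the $\Ds$-optimum design \eqref{Ds_schur} and the $\rm D$-optimum design for the auxiliary model coincide.

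The computation is essentially mechanical; the only genuine content is conceptual, namely recognizing that eliminating the nuisance parameter $\sigma^2$ through the Schur complement is precisely the centering achieved by including the intercept $\alpha_0$ in the auxiliary regression. I expect that identification to be the delicate point, with the remaining $\sigma^2$-cancellations and the tracking of the constant $2^s$ being routine bookkeeping.
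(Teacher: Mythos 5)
Your proof is correct --- every step checks out --- but it runs in the opposite direction from the paper's. The paper never evaluates the Schur complement $\mathcal{I}_{22.1}$ explicitly: it starts from the determinant-ratio form of the criterion, $\Phi_{\Ds}(\xi)=|\mathcal{I}(\xi)|/|\mathcal{I}_{11}(\xi)|$, and observes that the full Fisher information re-partitions into the block-diagonal form $\mathrm{diag}\big(\bm{M},\,\tfrac{1}{2\sigma^4}\bm{V}\big)$, where $\bm{V}$ is the moment matrix of the vectors $(1,\sigma^2\nabla\log h(\bm{x}_i;\bm{\gamma})^T)^T$; factoring $|\mathcal{I}|$ accordingly and rescaling by $\sigma^2$ gives $\Phi_{\Ds}=(1/2)^s|\widetilde{\bm{V}}(\xi;\bm{\gamma})|$, which is recognized (up to constants) as the D-criterion of the auxiliary model. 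You instead compute $\mathcal{I}_{22.1}$ in closed form --- exploiting the vanishing $\bm{\beta}$-block of $\mathcal{I}_{12}$ to get $\mathcal{I}_{22.1}=\tfrac{1}{2}\big[\sum_i\bm{a}_i\bm{a}_i^T\xi(\bm{x}_i)-\bar{\bm{a}}\bar{\bm{a}}^T\big]$ --- and then match this against the bordered auxiliary information matrix via the block-determinant formula, picking up the constant $2^s$. The two arguments rest on the same Schur-determinant algebra, so neither is more general, but each buys something: the paper's factorization is shorter and sidesteps the Schur computation entirely, while your route delivers a genuine bonus, since your closed form for $\mathcal{I}_{22.1}$, evaluated at $\bm{\gamma}_0$ (where $h\equiv 1$ forces $\nabla\log h=\nabla h$), is precisely the matrix appearing in the noncentrality parameter of the paper's Theorem 2 --- so your intermediate identity proves that theorem as a free byproduct, whereas the paper must derive it separately. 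Your closing remark correctly isolates the conceptual core shared by both proofs: profiling out $\sigma^2$ is what the intercept $\alpha_0$ accomplishes in the auxiliary regression.
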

\begin{cor}
	\label{cor:1}
	If $\gamma$ is scalar, then $s=1$ and  
	the ${\rm D}_1$-optimal design for $\gamma$ is 
	\begin{equation}
	\xi_{\rm{D}_1}=\left\{
	\begin{array}{cc}
	{x}_l  & {x}_u\\
	0.5 & 0.5
	\end{array}
	\right\}
	\label{D1}
	\end{equation}
	where ${x}_l\in \{x\!: \operatorname*{argmin}_x \nabla \log h(x,\tilde{\gamma})\}$ and ${x}_u\in \{x\!: \operatorname*{argmax}_x \nabla \log h(x,\tilde{\gamma})\}$ with $\nabla \log h(x,\gamma)=\frac{\partial \log h(x_i;\gamma)}{\partial \gamma}$.
\end{cor}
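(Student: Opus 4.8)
The plan is to invoke Theorem~\ref{th:Ds_to_D_aux} to convert the problem into a standard $\rm D$-optimality computation, and then to exploit the fact that the auxiliary model has a single regressor. When $s=1$, the auxiliary model of Theorem~\ref{th:Ds_to_D_aux} collapses to the simple linear regression $y_i = \alpha_0 + \alpha\, g(x_i) + \varepsilon_i$ with the scalar regressor $g(x) = \nabla \log h(x;\tilde\gamma)$, so its regression vector is $\bm{f}(x) = (1, g(x))^T$ and there are $p=2$ parameters. Hence it suffices to characterize the $\rm D$-optimum design for this two-parameter model and transport it back through Theorem~\ref{th:Ds_to_D_aux}.

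First I would write the information matrix of a design $\xi$ as
$$
\bm{M}(\xi) = \sum_i \xi(x_i)\,\bm{f}(x_i)\bm{f}(x_i)^T = \begin{bmatrix} 1 & \bar{g} \\ \bar{g} & \overline{g^2}\end{bmatrix},
$$
with $\bar{g} = \sum_i \xi(x_i)\, g(x_i)$ and $\overline{g^2} = \sum_i \xi(x_i)\, g(x_i)^2$, and then record the key algebraic identity $|\bm{M}(\xi)| = \overline{g^2} - \bar{g}^2 = \operatorname{Var}_\xi[g(x)]$. This turns the $\rm D$-criterion into the problem of maximizing the variance of the transformed variable $g(x)$ under the design measure, which is the crux of the argument.

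Next I would use compactness: since $\mathcal{X}$ is compact and $h$, hence $g$, is continuous, $g$ attains its minimum at some $x_l$ and its maximum at some $x_u$, and $g(\mathcal{X}) \subseteq [g(x_l), g(x_u)]$. Any random variable supported in an interval $[a,b]$ satisfies $\operatorname{Var} \le (b-a)^2/4$, with equality if and only if it places mass $1/2$ on each endpoint (Popoviciu's inequality and its equality case, which follows from $\operatorname{Var}(X)\le E[(X-\tfrac{a+b}{2})^2]\le (b-a)^2/4$). Applying this with $a = g(x_l)$ and $b = g(x_u)$ shows that $|\bm{M}(\xi)|$ is maximized precisely by the design whose induced law of $g(x)$ is the symmetric two-point distribution on $\{g(x_l),g(x_u)\}$, i.e.\ by the design $\xi_{\rm D_1}$ of~(\ref{D1}) placing weight $0.5$ at $x_l$ and $0.5$ at $x_u$; pulling this back through Theorem~\ref{th:Ds_to_D_aux} gives the claim.

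The argument is essentially routine once the variance identity is in hand, so there is no single serious obstacle; the only points requiring care are the existence of the extremal points $x_l, x_u$, guaranteed by compactness of $\mathcal{X}$ and continuity of $h$, and the tacit nondegeneracy assumption $g(x_l) < g(x_u)$, without which $g$ is constant, $|\bm{M}(\xi)|\equiv 0$, and no heteroscedasticity in this direction is detectable. As an alternative to Popoviciu's inequality, one could verify optimality via the General Equivalence Theorem of \cite{Kiefer_Wolfowitz1959}, checking that the variance function $\bm{f}(x)^T \bm{M}(\xi_{\rm D_1})^{-1}\bm{f}(x)$ attains its maximal value $2$ exactly at $x_l$ and $x_u$; I expect the direct variance-maximization route to be the cleanest.
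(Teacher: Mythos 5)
Your proof is correct and follows essentially the same route as the paper's: invoke Theorem~\ref{th:Ds_to_D_aux}, observe that for scalar $\gamma$ the determinant of the auxiliary information matrix equals $\operatorname{Var}_\xi\left(\nabla \log h(x;\tilde\gamma)\right)$, and conclude that this variance is maximized by equal weights at the extrema of $\nabla\log h$. The only difference is that you make explicit (via Popoviciu's inequality and its equality case) the final variance-maximization step that the paper states without justification, and you also flag the degenerate case $g(x_l)=g(x_u)$, both of which are welcome additions rather than deviations.
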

\medskip
When the inferential goal is testing (\ref{system}), the $\Ds$-optimum design is commonly applied  because somehow it maximizes  the asymptotic power of a likelihood-based test.
It is well known that, under the alternative hypotheses,  the log-likelihood ratio, Wald and Score tests are asymptotically distributed as a chi-squared random variable with noncentrality parameter $\zeta(\xi;\bm{\lambda};\bm{\gamma}_0)=\bm{\lambda}^T \mathcal{I}_{22.1}(\xi;{\bm{\gamma}}_0) \bm{\lambda}$. 
Therefore, the  $\Ds$-optimum design (\ref{Ds_schur}) with $\bm{\gamma}=\bm{\gamma}_0$  maximizes \lq\lq in some sense'' $\zeta(\xi;\bm{\lambda};\bm{\gamma}_0)$, without considering any specific value of $\bm{\lambda}$.

Replacing $\mathcal{I}_{11}$, $\mathcal{I}_{12}$ and $\mathcal{I}_{22}$ into the expressions of $\mathcal{I}_{22.1}(\xi;{\bm{\gamma}})$ and setting $\bm{\gamma}={\bm{\gamma}}_0$, we can  provide the analytic expression of the noncentrality parameter.
\begin{thm}
	\label{th:1}
	Given model (\ref{model}), the nononcentrality parameter of the asymptotic distribution of a likelihood statistic for testing (\ref{system}) is 
	\begin{eqnarray}
	\zeta(\xi;\bm{\lambda};\bm{\gamma}_0)
	&=&
	\dfrac{1}{2}\,\bm{\lambda}^T \left(\!\sum_{i=1}^k{\!\nabla h(\bm{x}_i;{\bm{\gamma}}_0)} \nabla h(\bm{x}_i;{\bm{\gamma}}_0)^T \xi(\bm{x}_i)\right.\nonumber
	\\
	&-&
	\left.
	\sum_{i=1}^k\! \nabla h(\bm{x}_i;{\bm{\gamma}}_0) \xi(\bm{x}_i)\, \sum_{i=1}^k \!\nabla h(\bm{x}_i;{\bm{\gamma}}_0)^T \xi(\bm{x}_i)\!\right) \bm{\lambda}.
	\label{noncentrality_parameter}
	\end{eqnarray}
\end{thm}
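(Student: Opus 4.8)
The plan is to evaluate the quadratic form $\zeta(\xi;\bm{\lambda};\bm{\gamma}_0)=\bm{\lambda}^T\mathcal{I}_{22.1}(\xi;\bm{\gamma}_0)\bm{\lambda}$ by substituting the explicit blocks $\mathcal{I}_{11}$, $\mathcal{I}_{12}$, $\mathcal{I}_{22}$ displayed above into the Schur complement $\mathcal{I}_{22.1}=\mathcal{I}_{22}-\mathcal{I}_{12}^T\mathcal{I}_{11}^{-1}\mathcal{I}_{12}$, and then specializing to $\bm{\gamma}=\bm{\gamma}_0$ using the normalization $h(\bm{x}_i;\bm{\gamma}_0)=1$. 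This is essentially a direct computation; the work lies in organizing the block algebra so that the cancellations are visible.

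First I would invert the block-diagonal matrix $\mathcal{I}_{11}$, obtaining $\mathcal{I}_{11}^{-1}=\mathrm{diag}\bigl(\bm{M}^{-1},\,2\sigma^4\bigr)$. The decisive structural observation is that the upper ($\bm{\beta}$) block of $\mathcal{I}_{12}$ is the zero matrix $\bm{0}^T$, so in the product $\mathcal{I}_{12}^T\mathcal{I}_{11}^{-1}\mathcal{I}_{12}$ the mean-information block $\bm{M}^{-1}$ is annihilated and only the scalar $\sigma^2$-entry $2\sigma^4$ survives. Introducing the shorthand $\bm{b}(\bm{\gamma})=\sum_{i=1}^k\frac{\nabla h(\bm{x}_i;\bm{\gamma})}{h(\bm{x}_i;\bm{\gamma})}\,\xi(\bm{x}_i)$, the bottom row of $\mathcal{I}_{12}$ equals $\frac{1}{2\sigma^2}\bm{b}(\bm{\gamma})^T$, whence $\mathcal{I}_{12}^T\mathcal{I}_{11}^{-1}\mathcal{I}_{12}=2\sigma^4\cdot\frac{1}{4\sigma^4}\,\bm{b}(\bm{\gamma})\bm{b}(\bm{\gamma})^T=\frac12\,\bm{b}(\bm{\gamma})\bm{b}(\bm{\gamma})^T$, the powers of $\sigma^2$ cancelling to leave a clean factor $1/2$.

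Next, since $\mathcal{I}_{22}=\frac12\sum_{i=1}^k\frac{\nabla h(\bm{x}_i;\bm{\gamma})\nabla h(\bm{x}_i;\bm{\gamma})^T}{h(\bm{x}_i;\bm{\gamma})^2}\,\xi(\bm{x}_i)$, the Schur complement takes the compact form $\mathcal{I}_{22.1}(\xi;\bm{\gamma})=\frac12\bigl(\sum_{i=1}^k\frac{\nabla h(\bm{x}_i;\bm{\gamma})\nabla h(\bm{x}_i;\bm{\gamma})^T}{h(\bm{x}_i;\bm{\gamma})^2}\xi(\bm{x}_i)-\bm{b}(\bm{\gamma})\bm{b}(\bm{\gamma})^T\bigr)$. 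Evaluating at $\bm{\gamma}=\bm{\gamma}_0$ and using $h(\bm{x}_i;\bm{\gamma}_0)=1$ removes every $h$-denominator, so that $\bm{b}(\bm{\gamma}_0)=\sum_{i=1}^k\nabla h(\bm{x}_i;\bm{\gamma}_0)\xi(\bm{x}_i)$. Sandwiching the resulting matrix between $\bm{\lambda}^T$ and $\bm{\lambda}$ then reproduces expression (\ref{noncentrality_parameter}) verbatim.

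I do not expect a genuine obstacle here: the only point requiring care is the bookkeeping of the block structure — specifically, recognizing that the vanishing $\bm{\beta}$-block of $\mathcal{I}_{12}$ decouples the mean parameters from the variance parameters (so that $\bm{M}$ never enters), and tracking the powers of $\sigma^2$ so that they cancel exactly. As a sanity check I would also observe that the final matrix is a design-weighted covariance-type matrix of the vectors $\nabla h(\bm{x}_i;\bm{\gamma}_0)$, which makes its positive semidefiniteness, and hence the nonnegativity of $\zeta(\xi;\bm{\lambda};\bm{\gamma}_0)$, immediate.
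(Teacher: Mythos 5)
Your proposal is correct and follows essentially the same route as the paper: the paper's justification of this theorem is precisely the direct substitution of the displayed blocks $\mathcal{I}_{11}$, $\mathcal{I}_{12}$, $\mathcal{I}_{22}$ into the Schur complement $\mathcal{I}_{22.1}=\mathcal{I}_{22}-\mathcal{I}_{12}^T\mathcal{I}_{11}^{-1}\mathcal{I}_{12}$, exploiting the zero $\bm{\beta}$-block of $\mathcal{I}_{12}$ and the normalization $h(\bm{x}_i;\bm{\gamma}_0)=1$, starting from the classical fact that $\zeta=\bm{\lambda}^T\mathcal{I}_{22.1}(\xi;\bm{\gamma}_0)\bm{\lambda}$ under local alternatives. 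Your block algebra (in particular the cancellation of the powers of $\sigma^2$ yielding the factor $\tfrac12\,\bm{b}\bm{b}^T$) checks out exactly.
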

\medskip	
\noindent
{\bf Remark 1.} From  Corollary \ref{cor:1}, if $\gamma$ is scalar and the inferential goal is testing hypothesis (\ref{system}), then the ${\rm D}_1$-optimal design for $\gamma$ is equally supported at ${x}_l\in \operatorname*{argmin}_x \nabla h(x;\gamma_0)$ and ${x}_u\in  \operatorname*{argmax}_x \nabla h(x;\gamma_0)$. Furthermore, from Theorem \ref{th:1}, it  exactly maximizes the noncentrality parameter. 
\section{Noncentrality parameter as first order approximation of the KL-criterion}
\label{sec:3}
The KL-criterion proposed by \cite{Fidalgo_etal2007} can be applied	to discriminate between heteroscedastic and homoscedastic models, under $H_1$ and $H_0$, respectively. If the heteroscedastic model 
	is completely known, i.e. if 
 	 $(\bm{\beta}_1^T,\sigma_1,\bm{\gamma}_1^T)$ are the assumed known parameter values under $H_1$,  then the KL-criterion is:
\begin{equation}
    \label{I12}
	I_{12}(\xi;\bm{\gamma}_1)=\min_{\bm{\beta},\sigma^2}\sum_{i=1}^k 
	\left(\frac{\sigma_1^2 h(\bm{x}_i)+\left[\mu(\bm{x}_i;\bm{\beta}_1)-\mu(\bm{x}_i;\bm{\beta})\right]^2}{\sigma^2} -\log \frac{\sigma_1^2 h(\bm{x}_i)}{\sigma^2} \right) \;\xi(\bm{x}_i), 
\end{equation}
where $h(\bm{x}_i)=h(\bm{x}_i; \bm{\gamma}_1)$.
\begin{thm}
	\label{thm:ex_lem1}
	Given a design $\xi$, let $A_h\!=\!\sum_{i=1}^k h(\bm{x}_i) \,\xi(\bm{x}_i)$ and $G_h\!=\!\prod_{i=1}^{k}[h(\bm{x}_i)]^{\xi(\bm{x}_i)}$ be the arithmetic and the geometric means of the values $h(\bm{x}_i)$, $i=1,\ldots,k$, respectively.	
	The KL-criterion becomes:
	\begin{equation}
	\label{eqn:I21}
	I_{12}(\xi;\bm{\gamma}_1) =
		1+\log A_h- \log G_h.
	\end{equation}
\end{thm}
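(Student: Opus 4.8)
The plan is to carry out the inner minimization in (\ref{I12}) in two stages, exploiting the fact that the objective separates cleanly in $\bm{\beta}$ and $\sigma^2$. First I would minimize over $\bm{\beta}$. Since the coefficient $1/\sigma^2$ multiplying the sum $\sum_{i=1}^k[\mu(\bm{x}_i;\bm{\beta}_1)-\mu(\bm{x}_i;\bm{\beta})]^2\,\xi(\bm{x}_i)$ is strictly positive, for every fixed $\sigma^2>0$ this nonnegative term is minimized exactly by the choice $\bm{\beta}=\bm{\beta}_1$, which makes it vanish. The crucial observation is that this holds irrespective of the (possibly nonlinear) form of $\mu$, because the mean function has the same functional form under $H_0$ and $H_1$: only the variance structure differs through $h$. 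Hence the inner minimum over $\bm{\beta}$ is zero and is attained independently of $\sigma^2$, so that $\min_{\bm{\beta},\sigma^2}=\min_{\sigma^2}\min_{\bm{\beta}}$ collapses to a one-dimensional problem in $\sigma^2$.

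Next I would treat that remaining scalar problem. Setting $\bm{\beta}=\bm{\beta}_1$, the objective reduces to
\begin{equation*}
\sum_{i=1}^k \left(\frac{\sigma_1^2 h(\bm{x}_i)}{\sigma^2}-\log\frac{\sigma_1^2 h(\bm{x}_i)}{\sigma^2}\right)\xi(\bm{x}_i).
\end{equation*}
Using that $\xi$ is a probability measure, so $\sum_{i=1}^k \xi(\bm{x}_i)=1$, I would rewrite this as $\sigma_1^2 A_h/\sigma^2 - \log\sigma_1^2 - \log G_h + \log\sigma^2$, where the arithmetic mean $A_h=\sum_{i=1}^k h(\bm{x}_i)\xi(\bm{x}_i)$ comes out of the ratio term, and $\log G_h=\sum_{i=1}^k \xi(\bm{x}_i)\log h(\bm{x}_i)$ arises from the logarithmic term by the very definition of the (weighted) geometric mean.

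Finally, differentiating this function of $\sigma^2$ and equating the derivative to zero yields the unique stationary point $\sigma^2=\sigma_1^2 A_h$, which is the global minimizer since the objective is convex in $1/\sigma^2$. Substituting this value back, the ratio term equals $1$, the $\log\sigma_1^2$ contributions cancel, and one is left with $1+\log A_h - \log G_h$, which is precisely (\ref{eqn:I21}). I do not anticipate any genuine difficulty in this argument: the only points demanding care are the decoupling of the $\bm{\beta}$-minimization (which rests entirely on the mean function being common to both hypotheses, so that the squared-error term can be driven exactly to zero) and the bookkeeping that converts the two weighted sums into $A_h$ and $\log G_h$.
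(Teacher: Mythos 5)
Your proposal is correct and follows essentially the same route as the paper's own proof: set $\bm{\beta}=\bm{\beta}_1$ to kill the squared-error term, then minimize over $\sigma^2$ to get $\hat\sigma^2=\sigma_1^2 A_h$, and substitute back to obtain $1+\log A_h-\log G_h$. Your nonnegativity argument for the $\bm{\beta}$-step (rather than the paper's differentiation) and the convexity remark for the $\sigma^2$-step are minor refinements of the same computation.
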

For the next results we need to introduce some regularity assumptions on $h$.
Let $\mathcal{B}(\bm{\gamma}_0,r)$ be the ball of radius $r$ centered at $\bm{\gamma}_0$.
We assume that there exists a value $r>0$, and finite constants $M_1$ and $M_2$ such that the function $h(\bm{x},\bm{\gamma})$ is twice continuously differentiable with respect to $\bm{\gamma} \in \mathcal{B}(\bm{\gamma}_0,r)$ for all $\bm{x} \in \cal{X}$ and that:
\begin{equation}\label{conditions_unif_continuity}
\begin{array}{c}
\sup_{x \in \mathcal{X}} \|\nabla h(\bm{x},\bm{\gamma_0})\| \leq M_1 < \infty; \\
\\
\sup_{x \in \mathcal{X}} \sup_{\bm{\gamma} \in \mathcal{B}(\bm{\gamma}_0,r)} \| h''(\bm{x},\bm{\bm{\gamma}})\|_2 \leq M_2 <\infty,
\end{array}
\end{equation}
where, $h''(\bm{x},\bm{\gamma})=\frac{\partial^2 }{\partial \bm{\gamma}\partial \bm{\gamma}^T}h(\bm{x},\bm{\gamma})$ and $\|\bm{A}\|_2$ indicates the spectral norm of a square matrix $\bm{A}$: $\|\bm{A}\|_2=\sup_{u \not=0} \frac{\|\bm{Au}\|}{\|\bm{u}\|} $, that coincides with the square root of the largest eigenvalue of $\bm{A}^T\bm{A}$.
From Theorem \ref{thm:ex_lem1}, the next result follows, whose proof is given in the Appendix.

\begin{thm}
\label{th:3}
Let $\bm{\gamma}_1=\bm{\gamma}_0+\bm{\lambda}/\sqrt{n}$, and let assumption (\ref{conditions_unif_continuity}) hold for some constants $M_1$, $M_2$ and $r>0$. Then, the KL-criterion is related to the noncentrality parameter of the asymptotic distribution of a likelihood-based test by:
\begin{eqnarray*}
I_{12}\big(\xi;\bm{\gamma}_1\big) 
&=& 1
+\frac{1}{n} \, \zeta(\xi;\bm{\lambda};\bm{\gamma}_0) + O\left(\dfrac{||\bm{\lambda}||^3}{n^{\frac{3}{2}}}\right).
\end{eqnarray*}
\end{thm}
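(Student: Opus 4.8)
The plan is to start from the closed-form expression $I_{12}(\xi;\bm{\gamma}_1) = 1 + \log A_h - \log G_h$ of Theorem \ref{thm:ex_lem1} and Taylor-expand the difference $\log A_h - \log G_h$ in powers of the perturbation $\bm{\delta} = \bm{\lambda}/\sqrt{n}$ about the homoscedastic value $\bm{\gamma}_0$, at which $h(\bm{x}_i;\bm{\gamma}_0) = 1$ for every $i$. Writing $\xi_i = \xi(\bm{x}_i)$, I would first expand each $h(\bm{x}_i;\bm{\gamma}_1)$ with a second-order Lagrange remainder,
$$h(\bm{x}_i;\bm{\gamma}_1) = 1 + \nabla h(\bm{x}_i;\bm{\gamma}_0)^T \bm{\delta} + \tfrac{1}{2}\,\bm{\delta}^T h''(\bm{x}_i;\bm{\gamma}^*_i)\,\bm{\delta},$$
and set $g_i = \nabla h(\bm{x}_i;\bm{\gamma}_0)^T\bm{\delta}$ for the linear term, absorbing the quadratic piece into a remainder $r_i$. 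The regularity conditions (\ref{conditions_unif_continuity}) are exactly what guarantees the uniform bounds $|g_i| \le M_1\|\bm{\delta}\|$ and $|r_i| \le \tfrac{1}{2}M_2\|\bm{\delta}\|^2$, so that $g_i = O(\|\bm{\lambda}\|/\sqrt{n})$ and $r_i = O(\|\bm{\lambda}\|^2/n)$ uniformly in $\bm{x}_i$, provided $n$ is large enough that $\bm{\gamma}_1$ (and hence $\bm{\gamma}^*_i$) lies in $\mathcal{B}(\bm{\gamma}_0,r)$.

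Next I would substitute these expansions into the two logarithmic terms. For the geometric mean I use $\log G_h = \sum_i \xi_i\log h(\bm{x}_i;\bm{\gamma}_1)$ together with $\log(1+a)=a-\tfrac{1}{2}a^2+O(a^3)$; for the arithmetic mean I write $A_h = 1 + \sum_i \xi_i(g_i+r_i)$ and expand $\log A_h$ in the same way. The decisive bookkeeping observation is that the first-order contribution $\sum_i \xi_i g_i$ and the linear-in-$r_i$ contribution $\sum_i \xi_i r_i$ appear identically in $\log A_h$ and in $\log G_h$, and therefore cancel in the difference, leaving only the genuinely quadratic terms
$$\log A_h - \log G_h = \tfrac{1}{2}\sum_i \xi_i\, g_i^2 - \tfrac{1}{2}\Big(\sum_i \xi_i\, g_i\Big)^2 + O\!\big(\|\bm{\lambda}\|^3/n^{3/2}\big).$$

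Finally I would substitute $g_i = \nabla h(\bm{x}_i;\bm{\gamma}_0)^T\bm{\lambda}/\sqrt{n}$ to factor out $1/n$ and recognize the bracketed quadratic form as precisely $\bm{\lambda}^T\big(\sum_i \nabla h(\bm{x}_i;\bm{\gamma}_0)\nabla h(\bm{x}_i;\bm{\gamma}_0)^T\xi_i - \sum_i \nabla h(\bm{x}_i;\bm{\gamma}_0)\xi_i\sum_i \nabla h(\bm{x}_i;\bm{\gamma}_0)^T\xi_i\big)\bm{\lambda}$, which by Theorem \ref{th:1} equals $2\,\zeta(\xi;\bm{\lambda};\bm{\gamma}_0)$; this gives the stated expansion. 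I expect the main obstacle to be controlling the remainder rigorously and \emph{uniformly}: one must check that every discarded cross-product and cubic term is genuinely $O(\|\bm{\lambda}\|^3/n^{3/2})$ with a constant independent of the design $\xi$ and of its support points. This is where conditions (\ref{conditions_unif_continuity}), together with the fact that $\xi$ is a probability measure ($\sum_i \xi_i = 1$, so weighted averages inherit the pointwise bounds), are indispensable; in particular the expansion of $\log(1+a)$ is legitimate only once $n$ is large enough that every $h(\bm{x}_i;\bm{\gamma}_1)$ is bounded away from zero uniformly in $\bm{x}_i$.
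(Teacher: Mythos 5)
Your proposal is correct and follows essentially the same route as the paper's own proof: both start from the closed form $I_{12}=1+\log A_h-\log G_h$ of Theorem \ref{thm:ex_lem1}, Taylor-expand $h(\bm{x};\bm{\gamma}_1)$ about $\bm{\gamma}_0$ with a Lagrange second-order remainder (exploiting $h(\bm{x};\bm{\gamma}_0)=1$), expand $\log(1+z)$ to second order, cancel the identical first-order contributions in the two logarithms, and identify the surviving quadratic form with $\tfrac{1}{n}\,\zeta(\xi;\bm{\lambda};\bm{\gamma}_0)$ via (\ref{noncentrality_parameter}). Your bookkeeping, including the factor-of-two matching with Theorem \ref{th:1} and the use of (\ref{conditions_unif_continuity}) to control the remainders uniformly over the support points, is exactly the paper's argument.
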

\red{Theorem \ref{th:3} essentially states that if  $n$ is large enough to make the error term negligible, the KL-optimum design maximizes the noncentrality parameter.
Therefore, when $\gamma$ is scalar (and $n$ is suffiently large),  KL- and ${\rm D}_1$-optimal designs should almost coincide, because the ${\rm D}_1$-optimality is proportional to the noncentrality parameter. Differently, when $\bm{\gamma}$ is not scalar, the $\Ds$-criterion is no longer proportional to the noncentrality parameter and therefore, KL- and $\Ds$-optimum designs differ. }
In Section \ref{examples}, for some values of $\bm{\lambda}$ and $n$, we compare the power of the log-likelihood ratio test when different designs are applied: the KL-optimum design for several specific values of $\bm{\lambda}$, the $\Ds$-optimal design and a uniform design, which is frequently applied in practice. 

The following theorem has been very helpful to develop the simulation study, because it provides a closed form for the KL-optimum design. 

\begin{thm}
	\label{th:KL}
	Let us denote by $\underline{h}=\inf_{\vett x} h(\vett x; \bm{\gamma}_1)>0 $ and $\overline{h}=\sup_{\vett x} h(\vett x;\bm{\gamma}_1)<\infty $. Let further $\mathcal{X}_l=\{\vett x:\,h(\vett x;\bm{\gamma}_1)=\underline{h} \}$ and $\mathcal{X}_u=\{\vett x:\, h(\vett x;\bm{\gamma}_1)=\overline{h} \}$. 
	A KL-optimal design is
	\begin{equation}
	\label{KL-design}
	\xi_{\bm{\gamma}_1}^{KL}=\left\{
	\begin{array}{cc}
	\underline{\bm{x}}  & \overline{\bm{x}}\\
	\omega & 1-\omega
	\end{array}
	\right\},\quad \omega:=\omega(\bm{\gamma}_1)=\frac{\overline{h}}{\overline{h}-\underline{h}}-\frac{1}{\log \overline{h}-\log\underline{h}}
\end{equation}
	where $\underline{\bm{x}}\in \mathcal{X}_l$ and $\overline{\bm{x}}\in \mathcal{X}_u$. \\
	If $n \,\omega $ is not an integer number, then the best approximation is its integer part. 
	
\end{thm}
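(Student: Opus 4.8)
The plan is to build directly on Theorem~\ref{thm:ex_lem1}, which rewrites the KL-criterion as $I_{12}(\xi;\bm{\gamma}_1)=1+\log A_h-\log G_h$. Since the additive constant is irrelevant for the optimization, maximizing $I_{12}$ is equivalent to maximizing
\[
\Phi(\xi):=\log A_h-\log G_h=\log\Big(\sum_{i=1}^k h(\bm{x}_i)\,\xi(\bm{x}_i)\Big)-\sum_{i=1}^k \xi(\bm{x}_i)\log h(\bm{x}_i).
\]
The first observation is that $\Phi$ depends on $\xi$ only through the probability distribution that the design induces on the values $\{h(\bm{x}_i)\}$: writing $T$ for a random variable with $P\big(T=h(\bm{x}_i)\big)=\xi(\bm{x}_i)$, we have $\Phi=\log\E[T]-\E[\log T]$, with $T\in[\underline{h},\overline{h}]$ almost surely. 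We may assume $\underline{h}<\overline{h}$ (genuine heteroscedasticity) and that the extrema are attained, so that $\mathcal{X}_l,\mathcal{X}_u\neq\emptyset$, which follows from the continuity of $h$ on the compact domain $\mathcal{X}$.

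The key step, which I expect to carry the real content, is to show that an optimal design can be supported only on $\mathcal{X}_l\cup\mathcal{X}_u$. Since $\log$ is strictly concave, it lies above its chord $\ell$ joining $(\underline{h},\log\underline{h})$ and $(\overline{h},\log\overline{h})$; hence $\log t\ge \ell(t)$ on $[\underline{h},\overline{h}]$, with equality only at $t\in\{\underline{h},\overline{h}\}$. As $\ell$ is affine, $\E[\log T]\ge \E[\ell(T)]=\ell(\E[T])$, and therefore
\[
\Phi=\log\E[T]-\E[\log T]\ \le\ \log\E[T]-\ell\big(\E[T]\big)=:g\big(\E[T]\big),
\]
where $g(\mu)=\log\mu-\ell(\mu)$ depends on $\xi$ only through the scalar $\mu=\E[T]\in[\underline{h},\overline{h}]$. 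Equality forces $T$ to be supported on $\{\underline{h},\overline{h}\}$, i.e. $\xi$ supported on $\mathcal{X}_l\cup\mathcal{X}_u$. Conversely, any two-point design placing weight $\omega$ on some $\underline{\bm{x}}\in\mathcal{X}_l$ and $1-\omega$ on some $\overline{\bm{x}}\in\mathcal{X}_u$ attains $\Phi=g(\mu)$ with $\mu=\omega\underline{h}+(1-\omega)\overline{h}$, and this $\mu$ sweeps all of $[\underline{h},\overline{h}]$ as $\omega$ ranges over $[0,1]$. Thus $\sup_{\xi}\Phi=\max_{\omega\in[0,1]}g\big(\mu(\omega)\big)$, and the supremum is achieved by such a two-point design, which justifies the claimed support.

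It then remains to perform a one-dimensional optimization. Substituting the two-point design gives
\[
\Phi(\omega)=\log\big(\omega\underline{h}+(1-\omega)\overline{h}\big)-\omega\log\underline{h}-(1-\omega)\log\overline{h},
\]
and I would differentiate to get $\Phi'(\omega)=\dfrac{\underline{h}-\overline{h}}{\omega\underline{h}+(1-\omega)\overline{h}}+\log\overline{h}-\log\underline{h}$. Setting $\Phi'(\omega)=0$ yields $\omega\underline{h}+(1-\omega)\overline{h}=\dfrac{\overline{h}-\underline{h}}{\log\overline{h}-\log\underline{h}}$, that is, the optimal arithmetic mean equals the logarithmic mean of $\underline{h}$ and $\overline{h}$; solving for $\omega$ produces exactly $\omega=\dfrac{\overline{h}}{\overline{h}-\underline{h}}-\dfrac{1}{\log\overline{h}-\log\underline{h}}$, as stated. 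Because $\Phi''(\omega)=-\dfrac{(\overline{h}-\underline{h})^2}{\big(\omega\underline{h}+(1-\omega)\overline{h}\big)^2}<0$, the map $\Phi$ is strictly concave and this critical point is its unique global maximizer; since the logarithmic mean lies strictly between $\underline{h}$ and $\overline{h}$ (by the standard geometric–logarithmic–arithmetic mean inequalities and $\underline{h}<\overline{h}$), one gets $\omega\in(0,1)$, so the weights are admissible.

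The closing statement on exact designs follows from the same strict concavity: when the design must assign an integer number $n_l$ of the $n$ runs to $\underline{\bm{x}}$, the feasible weights are $n_l/n$, and the strictly concave $\Phi$ is maximized over this grid at the integer nearest $n\omega$; taking the integer part $\lfloor n\omega\rfloor$, as the theorem records, is the convenient practical rounding. The only genuine obstacle in the whole argument is the concavity reduction of the second paragraph---recognizing that the right comparison object is the chord of $\log$ (which also pins down the equality case and hence the optimal support) rather than a bare Jensen inequality; everything after that is elementary calculus.
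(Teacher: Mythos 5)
Your proof of the main assertion is correct, and it takes a genuinely different route from the paper's. The paper proves the reduction to the two extreme support points by a coordinate-wise perturbation argument (part b) of Lemma \ref{lem:bounds}): fixing all but one of the values $h_i$, the criterion $D_n(a)$ has its interior stationary point $a=A_{n-1}$ as a minimum, so at an optimum every value must sit at $\underline{h}$ or $\overline{h}$; the weight is then found by optimizing the continuously relaxed allocation $r$ of points at $\underline{h}$, which is your one-dimensional problem in disguise ($\omega=r/n$). Your chord argument replaces that perturbation step with a single global inequality: $\log t\ge \ell(t)$ on $[\underline{h},\overline{h}]$, with equality exactly at the endpoints, gives $\Phi(\xi)\le g(\E_\xi T)$, and the sweep of $\mu=\E_\xi T$ by two-point designs closes the sandwich. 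This buys two things the paper's argument does not: it works directly with arbitrary approximate-design weights (the lemma is phrased for $n$ equally weighted values and reaches general weights only through the relaxation of $r$), and it delivers the equality case for free, i.e.\ every KL-optimal design must be supported on $\mathcal{X}_l\cup\mathcal{X}_u$, a characterization the paper never states. The closing calculus --- stationarity forcing the arithmetic mean of the design to equal the logarithmic mean of $\underline{h}$ and $\overline{h}$, plus strict concavity in $\omega$ --- is the same in both treatments.

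The one genuine gap in your proposal is the theorem's last sentence, on exact designs. Strict concavity of $\Phi(\omega)$ implies only that the best integer allocation is one of the two neighbours $\lfloor n\omega\rfloor$ or $\lfloor n\omega\rfloor+1$; it does not give ``the integer nearest $n\omega$'' (a concave function need not be symmetric about its maximizer), and a fortiori it does not single out the floor, which is what the statement asserts --- your ``convenient practical rounding'' is an admission, not a proof. You should know, however, that the paper's own justification of this sentence is also defective: in Lemma \ref{lem:bounds} the difference $D_n(\lfloor r^*\rfloor)-D_n(\lfloor r^*\rfloor+1)$ is not $\frac{b-1}{n}-\log b$ (the correct expression is $\log\frac{r+(n-r)b}{(r+1)+(n-r-1)b}-\frac{1}{n}\log b$ evaluated at $r=\lfloor r^*\rfloor$), and the claimed inequality $\frac{b-1}{n}-\log b\ge 0$ fails for large $n$. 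Indeed the floor claim itself fails in examples: with $\underline{h}=1$, $\overline{h}=e$, $n=10$, one gets $n\omega\approx 5.82$, while $D_{10}(6)\approx 0.1231 > 0.1201\approx D_{10}(5)$, so the ceiling beats the floor. On this point neither you nor the paper has a proof; the defensible statement is that the optimal exact allocation is $\lfloor n\omega\rfloor$ or $\lfloor n\omega\rfloor+1$, to be decided by direct comparison of the two values.
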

\red{Under mild assumptions, the following theorem proves that,  when $n$ goes to infinity (and thus $\bm{\gamma}_1 \rightarrow \bm{\gamma}_0$), the KL-optimum design tends to become equally supported at $\underline{\bm{x}}$ and $\overline{\bm{x}}$.  }
\begin{thm}\label{th:limitKL}
	\label{KL_asimpt}
	Let  $\overline{h}=\sup_{\bm{x}} h(\bm{x};\bm{\gamma}_1)$ and $\underline{h}=\inf_{\bm{x}} h(\bm{x};\bm{\gamma}_1)$.
	If $\underline{\bm{x}}=\arg\inf_{\bm{x}} h(\bm{x};\bm{\gamma}_1)$ and  $\overline{\bm{x}}=\arg\sup_{\bm{x}} h(\bm{x};\bm{\gamma}_1)$ do not depend on $\bm{\gamma}_1$ and $h(x;\bm{\gamma}_1)$ is such that $\overline{h}/\underline{h}\to 1$ as $\bm{\gamma}_1 \to \bm{\gamma}_0$, then
	$$\lim_{\bm{\gamma}_1 \to \bm{\gamma}_0} \omega(\bm{\gamma}_1)=1/2,$$
	where $\omega(\bm{\gamma}_1)$ is defined by (\ref{KL-design}).
	In particular,
	for all $\bm{\gamma}_1$ such that $\overline{h}/\underline{h}> 1$, $$\omega(\bm{\gamma}_1)> 1/2.$$
	
\end{thm}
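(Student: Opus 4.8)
The plan is to reduce the whole statement to an elementary one-variable fact. First I would note that $\omega(\bm{\gamma}_1)$ in (\ref{KL-design}) depends on $\overline{h},\underline{h}$ only through their ratio $t:=\overline{h}/\underline{h}$: substituting $\overline{h}=t\,\underline{h}$ and using $\log\overline{h}-\log\underline{h}=\log t$, the factor $\underline{h}$ cancels in both terms and
$$\omega(\bm{\gamma}_1)=f(t):=\frac{t}{t-1}-\frac{1}{\log t},\qquad t>1.$$
The hypothesis $\overline{h}/\underline{h}\to 1$ as $\bm{\gamma}_1\to\bm{\gamma}_0$ then says exactly that $t\to 1^{+}$, so the limit of $\omega$ reduces to $\lim_{t\to1^{+}}f(t)$; the additional assumption that $\underline{\bm{x}},\overline{\bm{x}}$ do not depend on $\bm{\gamma}_1$ is only needed to keep the support of $\xi_{\bm{\gamma}_1}^{KL}$ fixed, so that the design converges, as a measure, to the equally weighted two-point design.

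For the limit I would rewrite $f$ over a common denominator, $f(t)=\big(t\log t-(t-1)\big)/\big((t-1)\log t\big)$, which is a $0/0$ indeterminate form at $t=1$. A second-order expansion about $t=1$ (equivalently, two applications of l'H\^opital's rule) gives numerator and denominator second derivatives equal to $1$ and $2$ at $t=1$, so $\lim_{t\to1^{+}}f(t)=1/2$, which proves the first claim. The only point requiring care is that the original form is an $\infty-\infty$ indeterminacy, so a single differentiation does not resolve it; passing to the combined fraction above is what makes l'H\^opital applicable.

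For the strict inequality when $t>1$ I would clear denominators. Since $t-1>0$ and $\log t>0$, the inequality $f(t)>1/2$ is equivalent to $(t+1)\log t>2(t-1)$. Setting $g(t)=(t+1)\log t-2(t-1)$, one checks $g(1)=0$, $g'(1)=0$, and $g''(t)=(t-1)/t^2>0$ for $t>1$; hence $g'$ increases strictly from $g'(1)=0$, so $g'>0$, so $g$ increases strictly from $g(1)=0$ on $(1,\infty)$, giving $g(t)>0$ and therefore $\omega(\bm{\gamma}_1)=f(t)>1/2$ for every $t>1$. Equivalently, this is the classical strict inequality between the logarithmic mean $(t-1)/\log t$ and the arithmetic mean $(t+1)/2$ of $1$ and $t$. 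I do not expect any genuine obstacle beyond the limit computation just noted: once the reduction to $f(t)$ is in place, both parts are routine calculus.
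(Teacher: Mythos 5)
Your proposal is correct and follows essentially the same route as the paper: the same reduction of $\omega$ to a function of the ratio $t=\overline{h}/\underline{h}$, the same double application of l'H\^opital's rule (after combining into a single $0/0$ fraction) for the limit, and the same final inequality $(t+1)\log t>2(t-1)$ for strictness, which the paper proves in the equivalent form $\log(1+v)>2v/(v+2)$ with $v=t-1$ by a single sign-of-derivative computation rather than your two-derivative argument. These differences are cosmetic, so there is nothing to flag.
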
	
\red{Let the limiting KL-optimal design be denoted by $$\xi_{\gamma_0}^{KL}=\left\{
	\begin{array}{cc}
	\underline{\bm{x}}  & \overline{\bm{x}}\\
	0.5 & 0.5
	\end{array}
	\right\}.$$}
Theorem \ref{th:3} shows the asymptotic link between the KL-divergence and the noncentrality parameter of a likelihood-based test (under the alternative). 
\red{The following result proves that the asymptotic expansion in Theorem \ref{th:limitKL} holds uniformly in $\xi$, and this   implies that, as $n$ goes to infinity, the sequence of KL-optimal designs converges to the design which maximizes the noncentrality parameter. }

\begin{thm}\label{th:unif_cont}
Assume that (\ref{conditions_unif_continuity}) holds for some constants $M_1$, $M_2$ and $r>0$. Also assume that $\bm{\lambda}$ belongs to a bounded set $\bm{\Lambda}$: $\sup_{\bm{\lambda} \in\bm{\Lambda}} \|\bm{\lambda}\| \leq L<\infty$. 
Then the following result holds for all $\bm{\lambda} \in \bm{\Lambda}$
$$\sup_{\xi\in \Xi} \left| n( I_{12}(\xi;\bm{\gamma}_1)-1) - \zeta(\xi,\bm{\lambda},\bm{\gamma}_0) \right|  = O(n^{-1/2}),$$ 
\end{thm}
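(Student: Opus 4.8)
The plan is to upgrade the pointwise expansion behind Theorem~\ref{th:3} to a uniform one by tracking precisely where the design $\xi$ enters, and showing that it appears only through averages of quantities that are bounded uniformly over $\bm{x}\in\mathcal{X}$ and over $\bm{\lambda}\in\bm{\Lambda}$. I would start from the exact identity of Theorem~\ref{thm:ex_lem1}, namely $I_{12}(\xi;\bm{\gamma}_1)-1=\log A_h-\log G_h$, with $A_h=\sum_i h(\bm{x}_i;\bm{\gamma}_1)\,\xi(\bm{x}_i)$ and $\log G_h=\sum_i \xi(\bm{x}_i)\log h(\bm{x}_i;\bm{\gamma}_1)$, so that $n(I_{12}(\xi;\bm{\gamma}_1)-1)$ is a genuine functional of $\xi$ to be controlled.

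First I would Taylor-expand $h(\bm{x};\bm{\gamma}_1)$ in $\bm{\gamma}_1=\bm{\gamma}_0+\bm{\lambda}/\sqrt{n}$ to second order with Lagrange remainder. Using $h(\bm{x};\bm{\gamma}_0)=1$, this gives $h(\bm{x};\bm{\gamma}_1)=1+\tfrac{1}{\sqrt{n}}g(\bm{x})+\tfrac{1}{n}R_n(\bm{x})$, where $g(\bm{x})=\bm{\lambda}^T\nabla h(\bm{x};\bm{\gamma}_0)$ and $R_n(\bm{x})=\tfrac12\bm{\lambda}^T h''(\bm{x};\tilde{\bm{\gamma}})\bm{\lambda}$ for some $\tilde{\bm{\gamma}}$ on the segment joining $\bm{\gamma}_0$ and $\bm{\gamma}_1$. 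Assumption~(\ref{conditions_unif_continuity}) together with $\sup_{\bm{\lambda}\in\bm{\Lambda}}\|\bm{\lambda}\|\leq L$ yields the crucial uniform bounds $|g(\bm{x})|\leq LM_1$ and $|R_n(\bm{x})|\leq \tfrac12 L^2M_2$, valid for every $\bm{x}$, every $\bm{\lambda}\in\bm{\Lambda}$, and every $n$ large enough that $\bm{\gamma}_1\in\mathcal{B}(\bm{\gamma}_0,r)$, which is where the ball radius $r$ is used.

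Next I would substitute this into both logarithms via $\log(1+u)=u-\tfrac{u^2}{2}+\rho(u)$, with $|\rho(u)|\leq C|u|^3$ for $|u|\leq 1/2$. For $\log A_h$ the argument is the single average $u=\tfrac{\bar g}{\sqrt{n}}+\tfrac{\bar R_n}{n}$, whereas for $\log G_h$ the expansion is applied termwise to $u_i=\tfrac{g(\bm{x}_i)}{\sqrt{n}}+\tfrac{R_n(\bm{x}_i)}{n}$ and only then averaged against $\xi$. The order-$n^{-1/2}$ terms (both equal to $\bar g/\sqrt n$) and the $\bar R_n/n$ terms cancel in the difference $\log A_h-\log G_h$, leaving the quadratic contribution $\tfrac{1}{2n}\big(\sum_i g(\bm{x}_i)^2\xi(\bm{x}_i)-\bar g^2\big)$. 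This equals $\tfrac1n\,\zeta(\xi;\bm{\lambda};\bm{\gamma}_0)$, since $\zeta$ is exactly one half the variance of $\bm{\lambda}^T\nabla h(\bm{x};\bm{\gamma}_0)$ under $\xi$, as read off from~(\ref{noncentrality_parameter}).

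The main obstacle, and the only genuine difference from Theorem~\ref{th:3}, is making the remainder $O(n^{-3/2})$ hold \emph{uniformly} in $\xi$. Here I would exploit that $\xi$ enters solely through convex combinations $\sum_i(\cdot)\,\xi(\bm{x}_i)$: because $|u|$ and every $|u_i|$ are bounded by $LM_1/\sqrt{n}+\tfrac12 L^2M_2/n\leq 1/2$ for $n$ large, each cubic remainder is $O(n^{-3/2})$ uniformly in $\bm{x}$ (hence in $i$) and in $\bm{\lambda}\in\bm{\Lambda}$; averaging against the probability weights $\xi(\bm{x}_i)$ cannot increase this order, and the single remainder in $\log A_h$ is controlled the same way. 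Consequently $\sup_{\xi\in\Xi}|\log A_h-\log G_h-\tfrac1n\zeta|=O(n^{-3/2})$, and multiplying by $n$ yields $\sup_{\xi\in\Xi}\big|n(I_{12}(\xi;\bm{\gamma}_1)-1)-\zeta(\xi;\bm{\lambda};\bm{\gamma}_0)\big|=O(n^{-1/2})$, as claimed.
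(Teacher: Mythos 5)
Your proposal follows essentially the same route as the paper's proof: both start from the exact identity $I_{12}(\xi;\bm{\gamma}_1)-1=\log A_h-\log G_h$, Taylor-expand $h$ at $\bm{\gamma}_0$ with Lagrange remainder, use assumption (\ref{conditions_unif_continuity}) together with $\|\bm{\lambda}\|\le L$ to bound the increment uniformly in $\bm{x}$, $\bm{\lambda}$ and $\xi$ below $1/2$, expand both logarithms (the paper via the full series, you via a second-order expansion with cubic remainder --- a cosmetic difference), cancel the linear terms, identify the surviving variance term with $\zeta/n$, and control the remainders by bounds independent of $\xi$. One bookkeeping slip: with $u_i=g(\bm{x}_i)/\sqrt{n}+R_n(\bm{x}_i)/n$ the quadratic contribution is $\tfrac12\operatorname{Var}_\xi(u_i)$, which contains not only $\tfrac{1}{2n}\operatorname{Var}_\xi(g)=\zeta(\xi;\bm{\lambda};\bm{\gamma}_0)/n$ but also the cross term $n^{-3/2}\operatorname{Cov}_\xi(g,R_n)$ and $\tfrac12 n^{-2}\operatorname{Var}_\xi(R_n)$ --- precisely the terms the paper bounds explicitly by Cauchy--Schwarz; since your uniform bounds $|g|\le LM_1$ and $|R_n|\le\tfrac12 L^2M_2$ control them at order $n^{-3/2}$ uniformly in $\xi$, the conclusion is unaffected.
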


 \red{From Theorems \ref{th:limitKL} and \ref{th:unif_cont} we have the following result.
 \begin{cor} 
 	\label{Corollary:2}
 	The design which maximizes the noncentrality parameter $\zeta(\xi,\bm{\lambda},\bm{\gamma}_0) $ is the limiting KL-optimal design $\xi_{\gamma_0}^{KL}$.
\end{cor}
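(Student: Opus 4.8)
The plan is to read Corollary~\ref{Corollary:2} as a consequence of an \emph{argmax-stability} principle, fed by the uniform approximation of Theorem~\ref{th:unif_cont} and the limit of the weights in Theorem~\ref{th:limitKL}. Throughout, fix $\bm{\lambda}\in\bm{\Lambda}$, write $\bm{\gamma}_1=\bm{\gamma}_0+\bm{\lambda}/\sqrt{n}$, abbreviate $\zeta(\xi):=\zeta(\xi;\bm{\lambda};\bm{\gamma}_0)$, and denote by $\xi_n^{KL}:=\xi_{\bm{\gamma}_1}^{KL}$ the KL-optimal design of Theorem~\ref{th:KL}. Define $F_n(\xi):=n\big(I_{12}(\xi;\bm{\gamma}_1)-1\big)$. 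Since $n>0$, maximizing $F_n$ is the same as maximizing $I_{12}(\cdot;\bm{\gamma}_1)$, so $\xi_n^{KL}$ maximizes $F_n$ over $\Xi$. Theorem~\ref{th:unif_cont} says exactly that $F_n\to\zeta$ uniformly on $\Xi$, at rate $O(n^{-1/2})$, while Theorem~\ref{th:limitKL} gives $\omega(\bm{\gamma}_1)\to 1/2$; because the support points $\underline{\bm{x}},\overline{\bm{x}}$ are held fixed, this means $\xi_n^{KL}\to\xi_{\gamma_0}^{KL}$.

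The core step is to pass to the limit in the optimality inequality $F_n(\xi_n^{KL})\ge F_n(\xi)$, valid for every $\xi\in\Xi$ and every $n$. On the left, the triangle inequality
$$\big|F_n(\xi_n^{KL})-\zeta(\xi_{\gamma_0}^{KL})\big|\le \sup_{\eta\in\Xi}\big|F_n(\eta)-\zeta(\eta)\big|+\big|\zeta(\xi_n^{KL})-\zeta(\xi_{\gamma_0}^{KL})\big|$$
shows $F_n(\xi_n^{KL})\to\zeta(\xi_{\gamma_0}^{KL})$: the first summand vanishes by Theorem~\ref{th:unif_cont}, the second by continuity of $\zeta$ combined with $\xi_n^{KL}\to\xi_{\gamma_0}^{KL}$. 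On the right, $F_n(\xi)\to\zeta(\xi)$ by the same uniform convergence. Letting $n\to\infty$ therefore yields $\zeta(\xi_{\gamma_0}^{KL})\ge\zeta(\xi)$ for all $\xi\in\Xi$, i.e. $\xi_{\gamma_0}^{KL}$ maximizes the noncentrality parameter; as $\bm{\lambda}\in\bm{\Lambda}$ was arbitrary, the conclusion holds for every admissible $\bm{\lambda}$.

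Two auxiliary facts underpin the argument. First, $\Xi$ must be compact: this holds because $\mathcal{X}$ is compact, so the probability measures on $\mathcal{X}$ form a weak-$*$ compact set. Second, $\xi\mapsto\zeta(\xi)$ must be continuous; this is immediate from the explicit form~(\ref{noncentrality_parameter}), which is a quadratic functional of the first two moments of $\nabla h(\cdot;\bm{\gamma}_0)$ under $\xi$ and hence weak-$*$ continuous. In the setting of Theorem~\ref{th:limitKL} the continuity step is in fact trivial, since $\xi_n^{KL}$ and $\xi_{\gamma_0}^{KL}$ both belong to the one-parameter family of two-point designs on the fixed pair $\{\underline{\bm{x}},\overline{\bm{x}}\}$, on which $\zeta$ is a polynomial in the weight $\omega$; continuity then reduces to the convergence $\omega(\bm{\gamma}_1)\to 1/2$ already supplied by Theorem~\ref{th:limitKL}.

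The main obstacle is conceptual rather than computational: one must ensure that the uniform bound of Theorem~\ref{th:unif_cont}---not merely pointwise convergence---is what legitimizes interchanging the two limits, namely taking $n\to\infty$ in $F_n$ while simultaneously letting the \emph{moving} maximizer $\xi_n^{KL}$ approach $\xi_{\gamma_0}^{KL}$. Pointwise convergence alone would control $F_n(\xi)\to\zeta(\xi)$ for fixed $\xi$ but could not pin down $F_n(\xi_n^{KL})$, whose argument drifts with $n$; it is precisely the $O(n^{-1/2})$ supremum control that secures $F_n(\xi_n^{KL})\to\zeta(\xi_{\gamma_0}^{KL})$ and thereby closes the argument.
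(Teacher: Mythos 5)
Your proposal is correct and takes essentially the same route the paper intends: the corollary is stated as following from Theorems~\ref{th:limitKL} and~\ref{th:unif_cont}, and your argmax-stability argument (pass to the limit in the optimality inequality $F_n(\xi_n^{KL})\ge F_n(\xi)$ using the uniform $O(n^{-1/2})$ bound together with $\omega(\bm{\gamma}_1)\to 1/2$ on the fixed support $\{\underline{\bm{x}},\overline{\bm{x}}\}$) is exactly that implicit argument spelled out in detail. Your appeal to weak-$*$ compactness of $\Xi$ is superfluous---the argument never uses it, since the maximizers $\xi_n^{KL}$ exist in closed form by Theorem~\ref{th:KL}---but harmless.
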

}

\section{Applications}
\label{examples}
In this Section we compare the performance of different designs in testing hypotheses \eqref{system} through a simulation study. We apply the log-likelihood ratio test using samples of data generated from each design and then we compare the corresponding powers. We also develop a comparison among the asymptotic powers of the test statistic.   
 
In these applications, we consider 
model (\ref{model}), where $\sigma^2=1$, the mean function is $\mu(x,\bm{\beta})=\beta_0 +\beta_1 x $, with $\beta_0=\beta_1=1$ while 
three different kinds of heteroscedasticity are examined:
\begin{description}
	\item[Case 1:] 
	$
	h(x,\gamma)=e^{\gamma x}
	$;
	\item[Case 2:]
	$
	h(x,\gamma)= 1 + \gamma x + \sin (2\pi \gamma x)
	$;
	\item[Case 3:]
	$
	h(x,\bm{\gamma})=1+\gamma_1 x+ \gamma_2 x^2.
	$
\end{description}
Without loss of generality, we consider $x \in  [0,1]$ to guarantee the above variance functions to be positive for $\vett \gamma \in \Real_+^s$.
In Cases 1 and 2, function $h(x,\gamma)$ depends on a one-dimensional parameter $\gamma \in \mathbb{R}$, and thus $s=1$. As we can appreciate from Figure \ref{fig:h1_h2}, in the first case $h(x,\gamma)$ is monotonic in $x$, while in the second, for  large enough values of $\gamma$, describes a non-monotonic heteroscedasticity. Case 3, instead, concerns a two-dimensional parameter $\bm{\gamma} \in \mathbb{R}^2$, and thus $s=2$. Note that, in all the three cases, the homoscedastic model is obtained when ${\gamma}_0={0}$  or $\bm{\gamma}_0=\bm{0}$ (Cases 1 and 2 or  Case 3, respectively). Therefore, from $H_1$ in (\ref{system}), $\gamma=\lambda/\sqrt{n}$ in the first two cases and $\bm{\gamma}=(\gamma_1,\gamma_2)^T$ in Case 3, where $\gamma_1=\lambda_1/\sqrt{n}$ and $\gamma_2=\lambda_2/\sqrt{n}$.

\begin{figure}
\includegraphics[width = 0.49\linewidth]{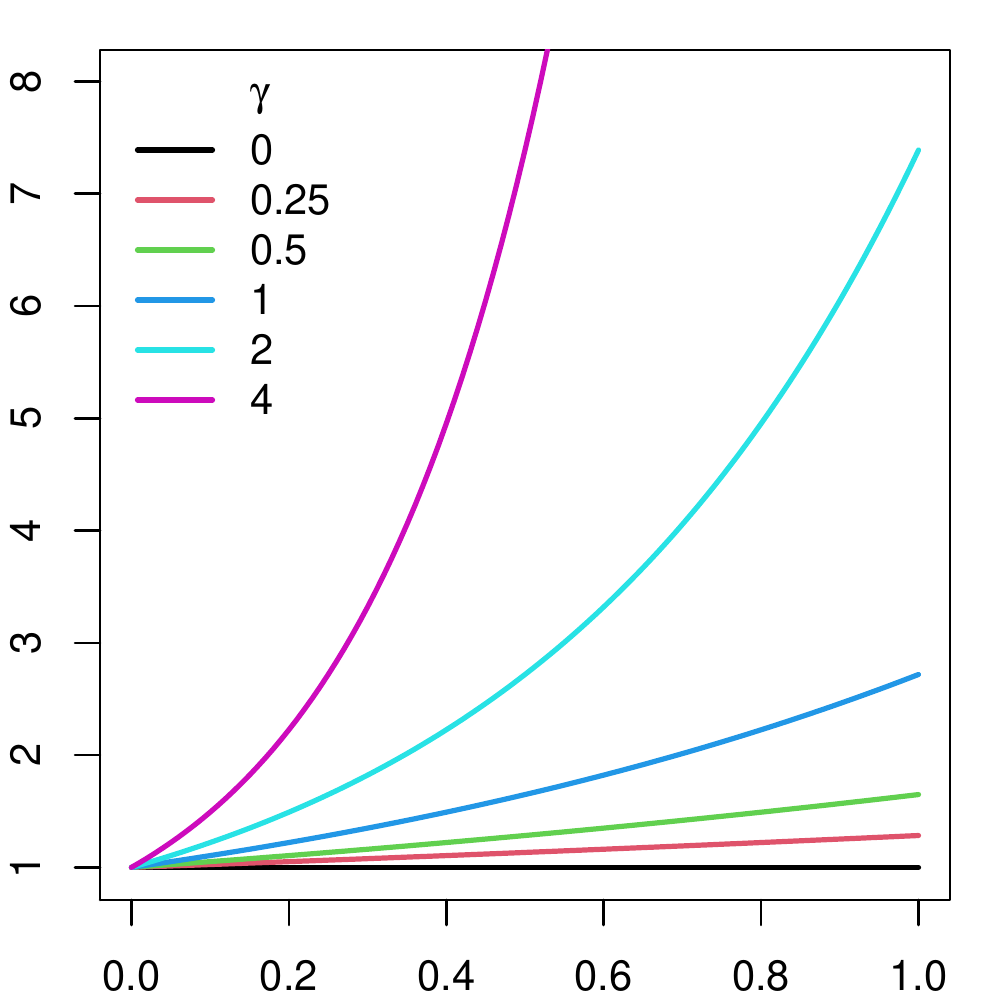}
\includegraphics[width = 0.49\linewidth]{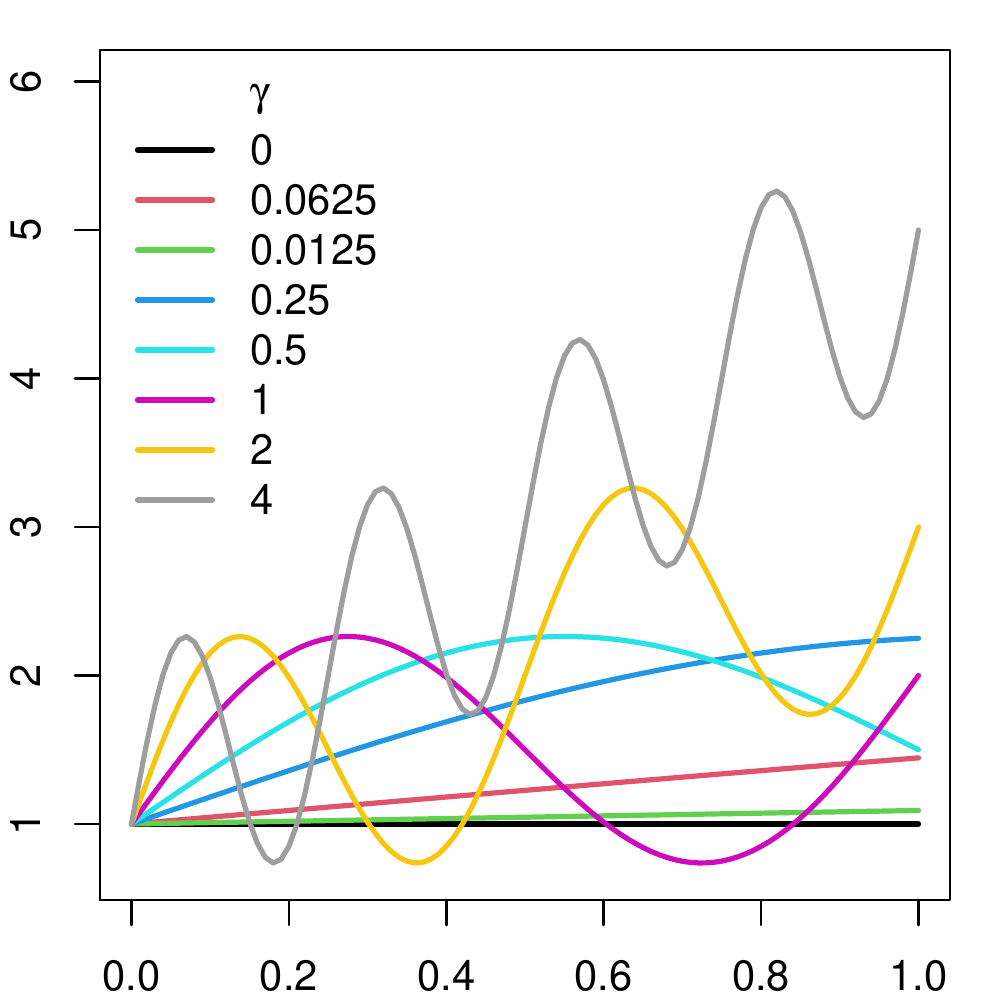}
\caption{Case 1 (left) and Case 2 (right) variance functions $h(x,\gamma)$, for different values of $\gamma$.}
\label{fig:h1_h2}
\end{figure}

 To discriminate these heteroscedastic models from the homoscedastic case, we firstly compute several KL-optimal designs (for different choices of $\lambda$, $\lambda_1$ and $\lambda_2$) and the $\Ds$-optimal design (for ${\gamma}={0}$ and $\bm{\gamma}=\bm{0}$). In addition, as a benchmark, we consider a uniform design 
$$\xi_{U_5}= \left\{ \begin{array}{ccccc}
0 & 0.25 & 0.5 & 0.75  & 1 \\ 
0.2 & 0.2 & 0.2 & 0.2 & 0.2 \\
\end{array}  \right\}.$$
Uniform designs are a common choice in practice and, in specific contexts, they provide nice theoretical guarantees (see \cite{Wiens1991}, \cite{Wiens2019} and \cite{Biedermann_Dette2001}).

 From each heteroscedastic model (Cases 1,2 and 3) we generate $n$ responses from model (\ref{model}), in different settings, applying the KL-optimal design, the $\Ds$-optimal design and the uniform design described above.
\begin{table}[h]
\centering
\resizebox{.99\textwidth}{!}{%
\begin{tabular}{c}
\begin{tabular}{rrr}
\multicolumn{3}{c}{Case 1} \\
\\[-.4cm]
$\xi_{{\rm D}_1} = \left\{ \begin{array}{cc}
0 & 1 \\
0.500 &
0.500 
\end{array}  \right\}$ & 
$\xi^{KL}_{0.25} = \left\{ \begin{array}{cc}
0 & 1 \\
0.521 &
0.479 
\end{array}  \right\}$ &
$\xi^{KL}_{0.5} = \left\{ \begin{array}{cc}
0 & 1 \\
0.542 &
0.458
\end{array} \right\}$ \\[.4cm] 
$\xi^{KL}_{1} = \left\{ \begin{array}{cc}
 0 &
1 \\
0.582 &
0.418
\end{array} \right\}$ & 
$\xi^{KL}_{2} = \left\{ \begin{array}{cc}
 0 &
1 \\
0.657 &
0.343
\end{array} \right\}$ &
$\xi^{KL}_{4} = \left\{ \begin{array}{cc}
0 & 1 \\
0.769 &
0.231
\end{array} \right\}$ 
\end{tabular} \\
$ $ \\
\begin{tabular}{rrrr}
\multicolumn{4}{c}{Case 2} \\
\\[-.4cm]

$\xi_{{\rm D}_1} = \left\{ \begin{array}{cc}
0 & 1 \\
0.500 & 0.500
\end{array} \right\}$ & 
$\xi^{KL}_{0.0625} = \left\{ \begin{array}{cc}
0 & 1 \\
0.504 & 0.496
\end{array} \right\}$& 
$\xi^{KL}_{0.125} = \left\{ \begin{array}{cc}
0 & 1 \\
0.507 & 0.493
\end{array} \right\}$  &
$\xi^{KL}_{0.25} = \left\{ \begin{array}{cc}
0 & 1 \\
0.510 & 0.490
\end{array} \right\}$ \\[.4cm] 
$\xi^{KL}_{0.5} = \left\{ \begin{array}{cc}
0.00 & 0.55\\
0.510 & 0.490
\end{array} \right\}$ & 
$\xi^{KL}_{1} = \left\{ \begin{array}{cc}
0.72 & 0.28 \\
0.512 & 0.488
\end{array} \right\}$ &
$\xi^{KL}_{2} = \left\{ \begin{array}{cc}
0.36 & 0.64 \\
0.520 & 0.480
\end{array} \right\}$ &
$\xi^{KL}_{4} = \left\{ \begin{array}{cc}
0.18 & 0.89 \\
0.532 & 0.468
\end{array} \right\}$ \\
\end{tabular}\\
$ $ \\
\begin{tabular}{rrr}
\multicolumn{3}{c}{Case 3} \\
\\[-.4cm]
$\xi_{{\rm D}_2} = \left\{ \begin{array}{ccc} 
				0 & 0.5 & 1 \\ 
				0.333 & 0.333 & 0.333 \\ 
				\end{array} \right\}$ &
$\xi^{KL}_{(0,0) }= \left\{ \begin{array}{cc} 
				0 & 1 \\ 
				0.5000 & 0.5000 \\
				\end{array} \right\}$	&				
$\xi^{KL}_{(0.05,0.05) }= \left\{ \begin{array}{cc} 
				0 & 1 \\ 
				0.5008 & 0.4992 \\
				\end{array} \right\}$ \\[.4cm]  
$\xi^{KL}_{(0.125,0.125)}  = \left\{ \begin{array}{cc} 
				0 & 1 \\ 
				0.5186 & 0.4814  \\
				\end{array} \right\}$ &
$\xi^{KL}_{(0.125,0.25)}  = \left\{ \begin{array}{cc} 
				0 & 1 \\ 
				0.5265 & 0.4735 \\
				\end{array} \right\}$ &
$\xi^{KL}_{(0.25,0.25)}  = \left\{ \begin{array}{cc} 
				0 & 1 \\ 
				0.5337 & 0.4663 \\ 
				\end{array} \right\}$ \\[.4cm] 
$\xi^{KL}_{(0.125,0.5)}  = \left\{ \begin{array}{cc} 
				0 & 1 \\ 
				0.5403 & 0.4597 \\
				\end{array} \right\}$ &
$\xi^{KL}_{(0.25,0.5)}  = \left\{ \begin{array}{cc} 
				0 & 1 \\ 
				0.5464 & 0.4536 \\
				\end{array} \right\}$ &				
$\xi^{KL}_{(0.5,0.5)}  = \left\{ \begin{array}{cc} 
				0 & 1 \\ 
				0.5573 & 0.4427 \\ 
				\end{array} \right\}$ \\[.4cm] 
$\xi^{KL}_{(0.25,1)}  = \left\{ \begin{array}{cc} 
				0 & 1 \\ 
				0.5668 & 0.4332 \\
				\end{array} \right\}$ &
$\xi^{KL}_{(0.5,1)}  = \left\{ \begin{array}{cc} 
				0 & 1 \\ 
				0.5753 & 0.4247 \\
				\end{array} \right\}$  &
$\xi^{KL}_{(1,1)}  = \left\{ \begin{array}{cc} 
				0 & 1 \\ 
				0.5898 & 0.4102 \\
				\end{array} \right\}$  \\[.4cm] 
$\xi^{KL}_{(0.5,2)}  = \left\{ \begin{array}{cc} 
				0 & 1 \\ 
				0.6018 & 0.3982 \\
				\end{array} \right\}$  &
$\xi^{KL}_{(1,2)}  = \left\{ \begin{array}{cc} 
				0 & 1 \\ 
				0.6120 & 0.3880 \\
				\end{array} \right\}$ &
$\xi^{KL}_{(2,2)}  = \left\{ \begin{array}{cc} 
				0 & 1 \\ 
				0.6287 & 0.3713 \\
				\end{array} \right\}$ 			
\end{tabular}
\end{tabular}}
\caption{Optimal designs for Cases 1, 2 and 3}
\label{tab:designs_all}
\end{table}

Table \ref{tab:designs_all} lists all $\Ds$- and KL-optimal designs we consider in our experiments.
In the first two Cases, the  ${\rm D}_1$-optimal design is supported at 0 and 1 as a consequence of Corollary \ref{cor:1}. In Case 1, all the KL-optimal designs share the same support points, that are 0 and 1. This behavior can be explained by Theorem \ref{th:KL}: the monotonicity of the variance function implies that $\underline{x}$ and $\overline{x}$ coincide with the extremes of the experimental domain $[0;1]$.
In Case 2, instead, the variance function is not monotone, for $\gamma>0.2755$, and thus
it does not necessarily reach its maximum and minimum at the extremes of $[0;1]$. When the support of $\xi^{KL}_{\gamma}$ is not constant with respect to $\gamma$, the limit result in Theorem \ref{th:limitKL} does not necessarily apply, although it is still true that $\omega(\gamma)>1/2$. 

Let us note that, from Table \ref{tab:designs_all} in both of the scalar cases as $\gamma={\lambda}/\sqrt{n}$ goes to $\gamma_0=0$, the KL-optimal design approaches the ${\rm D}_1$-optimal design, \red{which is the design which maximizes the noncentrality parameter (see Remark 1)}; this is consistent with \red{Theorem \ref{th:1} (Remark 1) and Corollary \ref{Corollary:2}} that imply asymptotic equivalence of ${\rm D}_1$- and KL-criteria. This is not true for Case 3, which concerns a multi-dimensional parameter $\bm{\gamma}$; in this case, the $\Ds$-optimal design has three support points while the KL-optimum design is always supported at 0 and 1.  
Note that in our settings, for Case 3, $\xi_{(\gamma_1,\gamma_2)}^{KL}=\xi_{(\gamma_2,\gamma_1)}^{KL}$. For this reason we do not consider redundant couples in our outcomes.

In Tables \ref{tab:h1_pows_05}-\ref{tab:finite_sample_case3} we display the experimental results for Cases 1, 2 and 3, respectively. We present Monte Carlo estimates of the actual significance level $\hat{\alpha}$ and the finite test powers $\hat{\eta}$ of the log-likelihood ratio (LR) test, for different optimality criteria in several experimental settings.
With a slight abuse of notation, the rows corresponding to $n=\infty$ display the nominal significance level $\alpha$ and the asymptotic test powers.
Let us note that, the asymptotic powers have been analytically computed from:
$$
{\rm P}\left( X^2_s[\zeta(\xi;\bm{\lambda};\bm{\gamma}_0)]\geq {\cal X}^2_{s;1-\alpha}  \right),
$$
where  ${\cal X}^2_{s;1-\alpha} $ is the quantile of a chi-squared random variable (rv) with $s$ degrees of freedom (df), $\alpha=0.05$, and $X_s^2[\zeta(\xi;\bm{\lambda};\bm{\gamma}_0)]$ is a non-central  chi-squared rv with $s$ df and noncentrality parameter $\zeta(\xi;\bm{\lambda};\bm{\gamma}_0)$ given by (\ref{noncentrality_parameter}).
Finite sample powers are computed as the proportions in the $M$ replications of LR-statistics larger than the asymptotic threshold ${\cal X}^2_{s;1-\alpha}  $.
In this paper only tables for $\alpha=0.05$ are included. Different significant levels produce similar results.

The values of the asymptotic powers confirm the theoretical results described in Remark 1 and \red{ Corollary \ref{Corollary:2}}. From Table \ref{tab:h1_pows_05} and \ref{tab:h2_pows_05} we can observe that the  ${\rm D}_1$-optimal design, which maximizes the noncentrality parameter for any value of $\lambda$ in the scalar case, actually guarantees the largest asymptotic powers in all the scenarios.
 \red{  On the other hand, when $\bm{\lambda}$ is a vector, 
from Table \ref{tab:finite_sample_case3} (see the rows  corresponding to $\bm{\gamma}^T=(0,0) $), we have that the design with the largest asymptotic power (for any choice of $\bm{\lambda}$), is the limiting KL-optimal design, $\xi_{\bm{\gamma}_0}^{KL}=\xi_{\bm{0}}^{KL}$ and  this is consistent with  Corollary \ref{Corollary:2}. }

To analyze the finite sample performances of tests, we generated, for each design and for each scenario (choice of $h$, $\vett \lambda$ and $n$), $M=10000$ Monte Carlo replications of samples of increasing sizes. Specifically, we chose $n=25,100,400$ for Case 1 and Case 3, whereas $n=100,400,1600,6400,25600$ for Case 2. In fact, the non-monotonicity of the conditional variance function of Case 2 makes it quite difficult to discriminate between the two hypotheses in (\ref{system}), thus a larger sample size is needed to obtain some non-zero power.
Several alternatives are considered, defined by $\lambda=\{5,10,20\}$ in Case 1, $\lambda=\{10,20,40\}$ in Case 2 and $\lambda_i=\{2.5, 5, 10\}$, with $i=1,2$, in Case 3 (removing redundant couples).

\subparagraph*{Case 1} Besides $\xi_{{\rm D}_1}$ and $\xi_{U_5} $, we considered the KL-optimal designs, $\xi_\gamma^{KL}$, for $\gamma=\lambda/\sqrt{n}$  in $\{0.25,\,0.5,\, 1,\, 2\, 4\}$.

\begin{table}[h]
\centering
\resizebox{.99\textwidth}{!}{%
\begin{tabular}{ccc|cc|cc|cc}
\multirow{2}{*}{$\lambda$} & \multirow{2}{*}{$n$} & \multirow{2}{*}{$\gamma$} & \multicolumn{2}{c}{$\xi^{KL}_{\gamma}$} & \multicolumn{2}{c}{$\xi_{{\rm D}_1}$ } & \multicolumn{2}{c}{$\xi_{U_5} $} \\
 & & & $\hat{\alpha}$  & \multicolumn{1}{c|}{$\hat{\eta}$} & $\hat{\alpha}$  & \multicolumn{1}{c|}{$\hat{\eta}$} & $\hat{\alpha}$  & \multicolumn{1}{c}{$\hat{\eta}$} \\
 \hline
 \multirow{4}{*}{5}  & 25       & 1    & 0.0680 & 0.3777 & 0.0672 & 0.4090 & 0.0686 & 0.2362  \\
  				     & 100      & 0.5  & 0.0555 & 0.4185 & 0.0509 & 0.4280 & 0.0548 & 0.2342  \\
  				     & 400      & 0.25 & 0.0463 & 0.4334 & 0.0500 & 0.4203 & 0.0488 & 0.2434  \\
   				     & $\infty$ & 0    & 0.0500 & 0.7054 & 0.0500 & 0.7054 & 0.0500 & 0.4239  \\

\hline
 \multirow{4}{*}{10} & 25       & 2    & 0.0674 & 0.8813 & 0.0672 & 0.9196 & 0.0672 & 0.6464  \\
  				     & 100      & 1    & 0.0517 & 0.9304 & 0.0509 & 0.9371 & 0.0548 & 0.6937  \\
  				     & 400      & 0.5  & 0.0517 & 0.9353 & 0.0500 & 0.9446 & 0.0488 & 0.7035  \\
   				     & $\infty$ & 0    & 0.0500 & 0.9988 & 0.0500 & 0.9988 & 0.0500 & 0.9424  \\
  				     
\hline
 \multirow{4}{*}{20} & 25       & 4    & 0.0863 & 0.9962 & 0.0672 & 1.0000 & 0.0672 & 0.9886  \\
  				     & 100      & 2    & 0.0591 & 1.0000 & 0.0509 & 1.0000 & 0.0548 & 0.9976  \\
  				     & 400      & 1    & 0.0557 & 1.0000 & 0.0500 & 1.0000 & 0.0488 & 0.9990  \\
   				     & $\infty$ & 0    & 0.0500 & 1.0000 & 0.0500 & 1.0000 & 0.0500 & 1.0000  \\

\hline
\end{tabular}
}
\caption{\small Monte Carlo actual significance level $\hat{\alpha}$ and finite powers $\hat{\eta}$, and their asymptotic counterparts,  obtained using different optimality criteria for Case 1, with variance function $h(x;\gamma)=e^{\gamma x}$, in various experimental settings. } 
\label{tab:h1_pows_05}
\end{table}

From Table \ref{tab:h1_pows_05}, which summarizes the finite sample and asymptotic results under Case 1, we may appreciate that, coherently with the asymptotic findings, the ${\rm D}_1$-optimal design  provides the highest finite power almost uniformly, with just one exception  where $\xi^{KL}_{\gamma}$ is slightly better. In addition, we can notice how, consistently \red{ with Remark 1, Corollary \ref{Corollary:2} and Theorem \ref{th:limitKL} }, the more $\gamma$ decreases the smaller the difference between KL- and ${\rm D}_1$-optimal designs.

In this example,  $h(x;\gamma)=e^{\gamma x}$ is a \emph{well-behaved} variance function; powers increase rapidly both with $n$ and $\lambda$, although for $\lambda=5$ it seems that a sample size of $n=400$ is not large enough to approach the asymptotic power. 
Our benchmark, the uniform design, appears to be the worst choice in all scenarios.
For all criteria, the actual significance level converges to the nominal level, displayed on rows $n=\infty$, as $n$ increases and it is relatively close to it already for $n=25$. In particular, design $\xi_{{\rm D}_1}$, shows the fastest convergence.

\subparagraph*{Case 2}

As  previously pointed out, when the  variance function is $h(x;\gamma)=1+0.1[\gamma x +\sin(2\pi\gamma x)]$, powers of the LR-statistic are very low, and tend to increase quite slowly with $n$ and $\lambda$, as displayed in Table \ref{tab:h2_pows_05}.

\begin{table}[h]
\centering
\resizebox{.99\textwidth}{!}{%
\begin{tabular}{ccc|cc|cc|cc}
\multirow{2}{*}{$\lambda$} & \multirow{2}{*}{$n$} & \multirow{2}{*}{$\gamma$} & \multicolumn{2}{c}{$\xi^{KL}_{\gamma}$} & \multicolumn{2}{c}{$\xi_{{\rm D}_1}$ } & \multicolumn{2}{c}{$\xi_{U_5} $} \\
 & & & $\hat{\alpha}$  & $\hat{\eta}$ & $\hat{\alpha}$  & $\hat{\eta}$ & $\hat{\alpha}$  & $\hat{\eta}$ \\\hline
 \multirow{6}{*}{10} & 100      & 1       & 0.0000 & 0.0003 & 0.0000 & 0.0021 & 0.0000 & 0.0005  \\
  				     & 400      & 0.5     & 0.0081 & 0.0305 & 0.0085 & 0.0108 & 0.0036 & 0.0120  \\
  				     & 1600     & 0.25    & 0.0480 & 0.3743 & 0.0482 & 0.3709 & 0.0596 & 0.2222  \\
  				     & 6400     & 0.125   & 0.0506 & 0.6096 & 0.0492 & 0.6159 & 0.0572 & 0.3921  \\
  				     & 25600    & 0.0625  & 0.0489 & 0.6916 & 0.0514 & 0.6943 & 0.0512 & 0.4049  \\
   				     & $\infty$ & 0       & 0.0500 & 0.9537 & 0.0500 & 0.9537 & 0.0500 & 0.7307  \\

\hline
 \multirow{6}{*}{20} & 100   & 2      & 0.0000 & 0.0217 & 0.0000 & 0.0389 & 0.0000 & 0.0090  \\
  				     & 400   & 1      & 0.0000 & 0.0899 & 0.0085 & 0.0814 & 0.0036 & 0.0739  \\
  				     & 1600  & 0.5    & 0.0493 & 0.3819 & 0.0482 & 0.0963 & 0.0596 & 0.2709  \\
  				     & 6400  & 0.25   & 0.0486 & 0.9133 & 0.0501 & 0.9125 & 0.0596 & 0.7274  \\
  				     & 25600 & 0.125  & 0.0518 & 0.9939 & 0.0514 & 0.9937 & 0.0541 & 0.8952  \\
   				     & $\infty$ & 0   & 0.0500 & 1.0000 & 0.0500 & 1.0000 & 0.0500 & 0.9993  \\
  				     
\hline
 \multirow{6}{*}{40} & 100    & 4     & 0.0000 & 0.2236 & 0.0000 & 0.1945 & 0.0000 & 0.0635  \\
  				     & 400    & 2     & 0.0000 & 0.3543 & 0.0085 & 0.2547 & 0.0036 & 0.1579  \\
  				     & 1600   & 1     & 0.0109 & 0.5399 & 0.0482 & 0.2660 & 0.0596 & 0.4365  \\
  				     & 6400   & 0.5   & 0.0521 & 0.9206 & 0.0501 & 0.2932 & 0.0596 & 0.7294  \\
   				     & 25600  & 0.25  & 0.0492 & 1.0000 & 0.0514 & 1.0000 & 0.0541 & 0.9955  \\
   				     & $\infty$ & 0   & 0.0500 & 1.0000 & 0.0500 & 1.0000 & 0.0500 & 1.0000  \\

\hline
\end{tabular}
}
\caption{\small Monte Carlo actual significance level $\hat{\alpha}$ and finite powers $\hat{\eta}$, and their asymptotic counterparts,  obtained using different optimality criteria for Case 2, with variance function $h(x;\gamma)=1+0.1[\gamma x +\sin(2\pi\gamma x)]$, in various experimental settings.} 
\label{tab:h2_pows_05}
\end{table}

It is  worth noting that the actual significance level gets close to the nominal level only for $n\geq 1600$, for all designs.
This means that the asymptotic critical values used to determine the rejection region are too high, which reflects in a poor performance of the LR-test for small samples in all scenarios.
In general, it seems that KL-optimal designs provide higher finite power faster (i.e. for lower $n$) with respect to the ${\rm D}_1$-optimal design.
Further, in our experiments, only for $n=25600$ and $\gamma=0.0625$ the ${\rm D}_1$-optimal design shows its asymptotic dominance over its competitors, while in the other cases it is dominated by the KL-optimal design.
Unfortunately, the finite sample power of KL-designs is strongly dependent on the knowledge of the alternative and  thus it is difficult to suggest a rule of thumb for the choice of $\gamma$ in case of uncertainty. To this extent we suggest the reading of \cite{Fidalgo_etal2010}.

\subparagraph*{Case 3} 
Table \ref{tab:finite_sample_case3} displays the finite powers and actual significance level (and their asymptotic counterparts) of the LR-tests associated to the different designs for variance function $h(x;\bm{\gamma})=1+\gamma_1 x+ \gamma_2 x^2$.
We recall that, for this particular specification of $h(\cdot;\cdot)$,  $\xi_{(\gamma_1,\gamma_2)}^{KL}=\xi_{(\gamma_2,\gamma_1)}^{KL}$, and thus we consider only the distinct KL-optimal designs.

\begin{table}[ht]
\centering
\resizebox{.99\textwidth}{!}{%

\begin{tabular}{ccc|cc|cc|cc}
\multirow{2}{*}{$\bm{\lambda}^T$} & \multirow{2}{*}{$n$} & \multirow{2}{*}{$\bm{\gamma}^T$} & \multicolumn{2}{c}{$\xi^{KL}_{\bm{\gamma}}$} & \multicolumn{2}{c}{$\xi_{{\rm D}_2}$ } & \multicolumn{2}{c}{$\xi_{U_5} $} \\
 & & & $\hat{\alpha}$  & $\hat{\eta}$ & $\hat{\alpha}$  & $\hat{\eta}$ & $\hat{\alpha}$  & $\hat{\eta}$ \\
 \hline														      
 \multirow{3}{*}{(2.5, 2.5)} & 25         & (0.5   , 0.5    ) & 0.0202 & 0.1104 & 0.0782 & 0.1522 & 0.0973 & 0.1529  \\
							 & 100        & (0.25  , 0.25   ) & 0.0168 & 0.1532 & 0.0556 & 0.1679 & 0.0634 & 0.1457  \\
  				     		 & 400        & (0.125 , 0.125  ) & 0.0129 & 0.1861 & 0.0535 & 0.1980 & 0.0532 & 0.1661  \\
                             & $\infty$   & (0     , 0      ) & 0.0500 & 0.6028 & 0.0500 & 0.4384 & 0.0500 & 0.3400  \\
  				     \hline
 \multirow{3}{*}{(2.5,   5)} & 25         & (0.5   , 1      ) & 0.0218 & 0.1837 & 0.0782 & 0.2160 & 0.0973 & 0.2095  \\
							 & 100        & (0.25  , 0.5    ) & 0.0194 & 0.3083 & 0.0556 & 0.2802 & 0.0634 & 0.2445  \\
  				     		 & 400        & (0.125 , 0.25   ) & 0.0146 & 0.4165 & 0.0535 & 0.3598 & 0.0532 & 0.2906  \\
                             & $\infty$   & (0     , 0      ) & 0.0500 & 0.9291 & 0.0500 & 0.8038 & 0.0500 & 0.6753  \\
	   				     \hline
 \multirow{3}{*}{(5,  5)   } & 25         & (1     , 1     ) & 0.0206 & 0.2697 & 0.0782 & 0.2766 & 0.0973 & 0.2459  \\
							 & 100        & (0.5   , 0.5   ) & 0.0157 & 0.4855 & 0.0556 & 0.4211 & 0.0634 & 0.3264  \\
  				     		 & 400        & (0.25  , 0.25  ) & 0.0133 & 0.6579 & 0.0535 & 0.5444 & 0.0532 & 0.4372  \\
                             & $\infty$   & (0     , 0     ) & 0.0500 & 0.9965 & 0.0500 & 0.9668 & 0.0500 & 0.9028  \\
\hline
 \multirow{4}{*}{(2.5, 10)}  & 25         & (0.5   , 2     ) & 0.0240 & 0.3744 & 0.0782 & 0.3418 & 0.0973 & 0.3143  \\
							 & 100        & (0.25  , 1     ) & 0.0158 & 0.6458 & 0.0556 & 0.5596 & 0.0634 & 0.4500  \\
  				     		 & 400        & (0.125 , 0.5   ) & 0.0159 & 0.8361 & 0.0535 & 0.7241 & 0.0532 & 0.5969  \\
                             & $\infty$   & (0     , 0     ) & 0.0500 & 1.0000 & 0.0500 & 0.9983 & 0.0500 & 0.9876  \\
  				     		   				     \hline
 \multirow{4}{*}{(5, 10)}    & 25         & (1     , 2     ) & 0.0221 & 0.4560 & 0.0782 & 0.4119 & 0.0973 & 0.3558  \\
							 & 100        & (0.5   , 1     ) & 0.0163 & 0.7638 & 0.0556 & 0.6509 & 0.0634 & 0.5378  \\
  				     		 & 400        & (0.25  , 0.5   ) & 0.0149 & 0.9280 & 0.0535 & 0.8376 & 0.0532 & 0.7131  \\
  				     		 & $\infty$   & (0     , 0     ) & 0.0500 & 1.0000 & 0.05   & 1.0000 & 0.05   & 0.9990  \\  				     		   				     \hline
 \multirow{4}{*}{(10, 10)}   & 25         & (2     , 2     ) & 0.0231 & 0.5664 & 0.0782 & 0.5232 & 0.0973 & 0.4285  \\
							 & 100        & (1     , 1     ) & 0.0163 & 0.9067 & 0.0556 & 0.8084 & 0.0634 & 0.6919  \\
  				     		 & 400        & (0.5   , 0.5   ) & 0.0135 & 0.9908 & 0.0535 & 0.9558 & 0.0532 & 0.8835  \\
                             & $\infty$   & (0     , 0     ) & 0.0500 & 1.0000 & 0.0500 & 1.0000 & 0.0500 & 1.0000  \\
  				     		  				     \hline		       				       				      				     
\end{tabular}
			
}
\caption{\small Monte Carlo actual significance level $\hat{\alpha}$ and finite powers $\hat{\eta}$, and their asymptotic counterparts,  obtained using different optimality criteria for Case 2, with variance function $h(x;\bm{\gamma})=1+\gamma_1 x+ \gamma_2 x^2$, in various experimental settings.
\label{tab:finite_sample_case3}
}
\end{table}

We can immediately notice that the KL-optimal designs present very low actual significance level, implying a diffuse under-rejection rate; differently, both  $\xi_{{\rm D}_2}$ and $\xi_{U_5}$ have a null rejection rate near to the nominal level, $\alpha=0.05$, and tend to over-reject for small samples.
The lower actual significance level implies that a downward correction should be done to the critical values, that would positively affect the powers of the test statistics under the KL-designs. Yet notwithstanding this, except for the  case $\bm{\lambda}=(2.5,2.5)$ the highest powers of the LR-test are those associated to the KL-optimal designs. This is in line with the asymptotic theory, suggesting that for small values of $\|\vett\gamma\|$ the KL-criterion tends to coincide with the noncentrality parameter of the asymptotic chi-squared distribution under the alternative (Theorem \ref{th:3}).

\section{Conclusions}

The main goal of this paper is the study of optimality criteria for detecting heteroscedasticity in a non-linear Gaussian regression model, under local alternatives. Three  tests of hypotheses are typically used for discriminating between nested rival models: the Likelihood Ratio, the Score and the Wald tests. All of them are related to the likelihood function (thus, they are referred to likelihood-based tests) and, under local alternatives, they share asymptotically the same noncentral chi-squared distribution.
On the other hand, from the experimental design perspective,  D$_s$- and KL-optimalities are the most common criteria to design experiments for discrimination. 

In this paper, we prove theoretically to what extent both these criteria are related to the noncentrality parameter of a likelihood-based test for discriminating heteroscedasticiy versus homoscedasticity. 

The KL-criterion is proved to be asymptotically equivalent to the noncentrality parameter. Therefore, asymptotically the KL-optimum design guarantees the maximum power of the likelihood-based tests. The D$_s$-criterion instead is proportional to the noncentrality parameter  whenever the variance function depends just on one parameter (even for finite samples, not only asymptotically). Therefore, in this last case, the two criteria of optimality are asymptotically equivalent. 

The numerical outcomes obtained from a simulation study confirm these theoretical results.

\appendix
\section*{Appendix: proofs and technical results}
\label{Appendix}
{\bf Theorem \ref{th:Ds_to_D_aux}.}
\begin{proof}
	\noindent 
	The $\Ds$-optimum design for $\bm{\gamma}$ maximizes the following criterion (see for instance, \cite{Atkinson_etal2007}, Sect.10.3):
	\begin{equation}
	\Phi_{\Ds}{(\xi; \bm{\beta},\sigma^2, \bm{\gamma})}
	=
	\frac{|\mathcal{I}(\xi;\bm{\beta},\sigma^2, \bm{\gamma})|}{|\mathcal{I}_{11}(\xi;\bm{\beta},\sigma^2, \bm{\gamma})|}=
	\frac{|\mathcal{I}(\xi;\bm{\beta},\sigma^2, \bm{\gamma})|}{\displaystyle\frac{1}{2 \sigma^4}\,|\bm{M}(\xi;\bm{\beta},\sigma^2, \bm{\gamma})|}.
	\label{Ds}
	\end{equation}
	Since the Fisher information matrix can be partitioned as follows:
	\begin{equation}
	\mathcal{I}(\xi;\bm{\beta},\sigma^2, \bm{\gamma}) =
	\begin{bmatrix}
	\bm{M}(\xi;\bm{\beta},\sigma^2, \bm{\gamma}) & \bm{0}\\
	\bm{0} & \frac{1}{2\sigma^4} {\bm{V}}(\xi;\sigma^2, \bm{\gamma})
	\end{bmatrix},
	\label{I}
	\end{equation}
	where
	$$
	{\bm{V}}(\xi;\sigma^2\!, \bm{\gamma})\!=\!\! \sum_{i=1}^k \!\bm{f}(\bm{x}_i;\sigma^2\!,\!\bm{\gamma}) \bm{f}(\bm{x}_i;\sigma^2\!,\!\bm{\gamma})^T
	\xi(\bm{x}_i), \quad 
	\bm{f}(\bm{x};\!\sigma^2\!,\!\bm{\gamma})\!=\!\!
	\begin{bmatrix}
	1 \\
	\sigma^2 \,\nabla\!\log h(\bm{x};\bm{\gamma})
	\end{bmatrix},
	$$
	the $\Ds$-criterion (\ref{Ds}) becomes,
	\begin{equation}
	\Phi_{\Ds}{(\xi;\sigma^2, \bm{\gamma})}
	=
	\left(\frac{1}{2 \sigma^4}\right)^s|{\bm{V}}(\xi;\sigma^2, \bm{\gamma})|
	=
	\left(\frac{1}{2 }\right)^s|{\widetilde{\bm{V}}}(\xi; \bm{\gamma})|
	\label{Ds_1}
	\end{equation}
	where 
	$$
	\widetilde{\bm{V}}(\xi; \bm{\gamma})= \sum_{i=1}^k \tilde{\bm{f}}(\bm{x}_i;\bm{\gamma}) \tilde{\bm{f}}(\bm{x}_i;\bm{\gamma})^T
	\, \xi(\bm{x}_i), \quad 
	\tilde{\bm{f}}(\bm{x};\bm{\gamma})\!=\!
	\begin{bmatrix}
	1 \\
	\nabla\!\log h(\bm{x};\bm{\gamma})
	\end{bmatrix},
	$$
	which proves the theorem.
\end{proof}
\noindent{\bf Corollary \ref{cor:1}.}
\begin{proof}
The proof follows immediately from Theorem \ref{th:Ds_to_D_aux}.
	Since $\gamma$ is scalar, 
	\begin{align*}
	|\tilde{\bm{V}}|
	& =  \displaystyle\sum_{i=1}^k \left[
	\frac{\nabla h(x_i;\gamma)}{h(x_i;\gamma)}
	\right]^2 \xi(\bm{x}_i) - 
	\left[
	\displaystyle\sum_{i=1}^k{\frac{ \nabla h(x_i;\gamma)}{h(x_i;\gamma)}\, \xi(\bm{x}_i)}
	\right]^2 
	= 
	\operatorname{Var} \left( \nabla \log h(x_i;\gamma)\right),
	\end{align*}
	where $\operatorname{Var(\cdot)}$ denotes the variance with respect to $\xi$. Therefore, a ${\rm D}_1$-optimal design for $\gamma$  is equally weighted at ${x}_l\in \{x\!: \operatorname*{argmin}_x \nabla \log h(x;\gamma)\}$ and ${x}_u\in \{x\!: \operatorname*{argmax}_x \nabla \log h(x;\gamma)\}$.
\end{proof}

\noindent{\bf Theorem \ref{thm:ex_lem1}.}
\begin{proof}
	Differentiating the right-hand side of (\ref{I12}) wrt $\bm{\beta}$ and setting it equal to zero we obtain $\hat{\bm{\beta}}=\bm{\beta}_1$ and the minimum Kullback-Leibler distance becomes:
	\begin{equation}
	\min_{\sigma^2} \sum_{i=1}^k  \left[\frac{\sigma_1^2 h(\bm{x}_i)}{\sigma^2} -\log \frac{\sigma_1^2 h(\bm{x}_i)}{\sigma^2} \right] \xi(\bm{x}_i).
	\label{min_KL-dist}
	\end{equation}
	Differentiating the function to be minimized in (\ref{min_KL-dist}) wrt $\sigma^2$ and setting it equal to zero: 
	$$
	\frac{\partial }{\partial \sigma^2}  \sum_{i=1}^k \left[
	\frac{\sigma_1^2 h(\bm{x}_i)}{\sigma^2}-\log [\sigma_1^2 h(\bm{x}_i)]+\log \sigma^2
	\right]\xi(\bm{x}_i) =0,
	$$
	we obtain
	$$
	\hat\sigma^2=\sigma_1^2 \sum_{i=1}^k h(\bm{x}_i) \;\xi(\bm{x}_i)
	$$
	and thus the KL-criterion (\ref{min_KL-dist}) becomes:
	\begin{eqnarray}
	I_{12}(\xi;\bm{\gamma}_1) &=&
	1+\log \sum_{i=1}^k  h(\bm{x}_i) \,\xi(\bm{x}_i)-\sum_{i=1}^k \log h(\bm{x}_i) \,\xi(\bm{x}_i) \nonumber\\
	&=&
	1+\log \sum_{i=1}^k h(\bm{x}_i) \,\xi(\bm{x}_i)-\log \prod_{i=1}^{k}[h(\bm{x}_i)]^{\xi(\bm{x}_i)}.
	\label{KL}
	\end{eqnarray}
\end{proof}
\noindent{\bf Theorem \ref{th:3}.}
\begin{proof}
	Let $\bm{\gamma}_1={\bm{\gamma}}_0+\bm{\lambda}/\sqrt{n}$ for a specific value of $\bm{\lambda}$. If $n$ is large enough to guarantee $\bm{\gamma}_0+\bm{\lambda}/\sqrt{n} = \bm{\gamma}_1 \in \mathcal{B}(\bm{\gamma}_0,r)$, assumption (\ref{conditions_unif_continuity}) allows us to compute a first order Taylor expansion of the function $h(\bm{x};\bm{\gamma}_1)$ at ${\bm{\gamma}}_0$, and thus (\ref{eqn:I21}), becomes:
	\begin{eqnarray}\label{eq:Taylor_exp_h}
		I_{12}(\xi;\bm{\gamma}_1) &=&\!\!
		1+\log \left\{ \sum_{i=1}^k \Big[ h(\bm{x}_i;{\bm{\gamma}}_0)+\nabla h(\bm{x}_i;{\bm{\gamma}}_0)^T (\bm{\gamma}_1-{\bm{\gamma}}_0) \right. \nonumber\\
		&+&
		\left. \frac{1}{2}\,(\bm{\gamma}_1-{\bm{\gamma}}_0)^T h^{''}\!(\bm{x}_i;\bm{\bar\gamma})\,
		(\bm{\gamma}_1-{\bm{\gamma}}_0) \Big] \xi(\bm{x}_i)  \right\}
		\nonumber\\
		&-&
		\sum_{i=1}^k \xi(\bm{x}_i) \log \Big[ h(\bm{x}_i;{\bm{\gamma}}_0) + \nabla h(\bm{x}_i;{\bm{\gamma}}_0)^T (\bm{\gamma}_1-{\bm{\gamma}}_0)
		\nonumber\\
		& +&
		\frac{1}{2}\,(\bm{\gamma}_1-{\bm{\gamma}}_0)^T h^{''}\!(\bm{x}_i;\bm{\bar\gamma})\,
		(\bm{\gamma}_1-{\bm{\gamma}}_0) \Big]   \nonumber\\
		&=&
		1\!+\!\log \left\{\! 1+\sum_{i=1}^k \!\left[\nabla h(\bm{x}_i;{\bm{\gamma}}_0)^T \frac{\bm{\lambda}}{\sqrt{n}} + \frac{1}{2\,n}\bm{\lambda}^T h^{''}\!(\bm{x}_i;\bm{\bar\gamma})
		\bm{\lambda} \right] \!\xi(\bm{x}_i) \! \right\}
		\nonumber\\
		&-&
		\sum_{i=1}^k \xi(\bm{x}_i) \log \left[1+\nabla h(\bm{x}_i;{\bm{\gamma}}_0)^T \frac{\bm{\lambda}}{\sqrt{n}} + \frac{1}{2\,n}\bm{\lambda}^T h^{''}\!(\bm{x}_i;\bm{\bar\gamma})
		\bm{\lambda} \right],   
	\end{eqnarray}
	where $h^{''}\!(\bm{x}_i;\bm{\gamma})$ denotes the Hessian matrix of $h(\bm{x}_i;\bm{\gamma})$ and $\bm{\bar\gamma}$ is a value of $\bm{\gamma}$ such that $||{\bm{\gamma}}_0-{\bar{\gamma}}||\leq ||{\bm{\gamma}}_0-{\bm{\gamma}}_1||$.
	From the Taylor expansion of $\log(1+z)$ up to the second order, the previous expression becomes:
	\begin{eqnarray*}
		I_{12}(\xi;\bm{\gamma}_1) &=&
		1\!+\! \sum_{i=1}^k \!\left[\nabla h(\bm{x}_i;{\bm{\gamma}}_0)^T \frac{\bm{\lambda}}{\sqrt{n}} + \frac{1}{2\,n}\bm{\lambda}^T h^{''}\!(\bm{x}_i;\bm{\bar\gamma})
		\bm{\lambda} \right] \!\xi(\bm{x}_i)\\
		&-&\!\!\!
		\frac{1}{2}
		\left\{\! \sum_{i=1}^k \!\left[\nabla h(\bm{x}_i;{\bm{\gamma}}_0)^T \frac{\bm{\lambda}}{\sqrt{n}}\! +\! \frac{1}{2\,n}\bm{\lambda}^T h^{''}\!(\bm{x}_i;\bm{\bar\gamma})
		\bm{\lambda} \right] \!\xi(\bm{x}_i) \! \right\}^2 \!\!\!+\! O\!\left(\dfrac{||\bm{\lambda}||^3}{n^{\frac{3}{2}}}\right)
		\\
		&-&
		\sum_{i=1}^k \xi(\bm{x}_i) \left\{
		\nabla h(\bm{x}_i;{\bm{\gamma}}_0)^T \frac{\bm{\lambda}}{\sqrt{n}} + \frac{1}{2\,n}\bm{\lambda}^T h^{''}\!(\bm{x}_i;\bm{\bar\gamma}) \bm{\lambda} \right. \\
		&-&
		\left.
		\frac{1}{2}
		\left[ \nabla h(\bm{x}_i;{\bm{\gamma}}_0)^T \frac{\bm{\lambda}}{\sqrt{n}} + \frac{1}{2\,n}\bm{\lambda}^T h^{''}\!(\bm{x}_i;\bm{\bar\gamma}) \bm{\lambda} \right]^2
		+O\!\left(\dfrac{||\bm{\lambda}||^3}{n^{\frac{3}{2}}}\right)
		\right\}   
	\end{eqnarray*}	
	The first terms of the two developments simplify and thus:
	\begin{eqnarray*}
		I_{12}(\xi;\bm{\gamma}_1) &=&
		1-
		\frac{1}{2}
		\left[\sum_{i=1}^k \!\nabla h(\bm{x}_i;{\bm{\gamma}}_0)^T \frac{\bm{\lambda}}{\sqrt{n}} \,\xi(\bm{x}_i) \right] ^2 
		\\
		&+&
		\frac{1}{2}
		\sum_{i=1}^k \xi(\bm{x}_i) 
		\left[ \nabla h(\bm{x}_i;{\bm{\gamma}}_0)^T \frac{\bm{\lambda}}{\sqrt{n}}  \right]^2 
		+O\!\left(\dfrac{||\bm{\lambda}||^3}{n^{\frac{3}{2}}}\right)
		\\  
		&=&
		1-
		\frac{1}{2\,n}
		\left[\bm{\lambda}^T \sum_{i=1}^k \!\nabla h(\bm{x}_i;{\bm{\gamma}}_0) \,\xi(\bm{x}_i)  \right]
		\left[\sum_{i=1}^k \!\nabla h(\bm{x}_i;{\bm{\gamma}}_0)^T \,\xi(\bm{x}_i) \bm{\lambda} \right]  
		\\
		&+&
		\frac{1}{2\,n}
		\sum_{i=1}^k \xi(\bm{x}_i) 
		\left[\bm{\lambda}^T \nabla h(\bm{x}_i;{\bm{\gamma}}_0)  \right] \left[ \nabla h(\bm{x}_i;{\bm{\gamma}}_0)^T \bm{\lambda}  \right]
		+ O\!\left(\dfrac{||\bm{\lambda}||^3}{n^{\frac{3}{2}}}\right)
	\end{eqnarray*}
	Rearranging the terms and taking into account Expression (\ref{noncentrality_parameter}) for the noncentrality parameter, we obtain the thesis.
\end{proof}  

Next Lemma is used for the proof of Theorem \ref{th:KL}
\begin{lem}\label{lem:bounds}
	Let $h_i$, $i=1,\ldots, n$ be $n$ positive quantities such that $$0<\underline{h}= h_1 \leq h_2 \leq \cdots \leq h_n = \overline{h}<\infty,$$ and let $A_n= n^{-1}\sum_i h_i $ and $G_n= \sqrt[n]{\prod_i h_i }$ the arithmetic and geometric means of $\{h_i\}_{i\leq n}$.
	We have the following bounds for $D_n=\log A_n-\log G_n$:
	\begin{equation}
	\label{eqn:Dn}
	\frac{2}{n}\left[\log \frac{\underline{h}+\overline{h}}{2}-\frac{1}{2} (\log \underline{h}+\log \overline{h})\right]\leq D_n \leq -\log H + H - 1,
	\end{equation}
	where
	\begin{equation}
	\label{H}
	H=\underline{h}\cdot  \frac{\log \overline{h}-\log \underline{h}}{\overline{h}-\underline{h}}.
	\end{equation}
	The lower bound is achieved when $h_1=\underline{h}$, $h_{n}=\overline{h}$ and $h_i=(\overline{h}+\underline{h})/2$ for $i=2,\ldots,n-1$.
	Further, 
	the upper bound is reached for $h_1=\cdots=h_{r^*}=\underline{h}$ and $h_{r^*+1}=\cdots = h_n=\overline{h}$, where $r^*=\left\lfloor n\left(\frac{\overline{h}}{\overline{h}-\underline{h}}-\frac{1}{\log \overline{h}-\log\underline{h}}\right)\right\rfloor $ and $\lfloor \cdot \rfloor$ denotes the integer part of a number.
\end{lem}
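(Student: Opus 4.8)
The plan is to treat $D_n$ as a function of the interior entries $h_2,\dots,h_{n-1}\in[\underline{h},\overline{h}]$, holding $h_1=\underline{h}$ and $h_n=\overline{h}$ fixed, and to locate its extrema on this compact box. The single computation that drives everything is the partial derivative $\partial D_n/\partial h_j=\tfrac1n\bigl(1/A_n-1/h_j\bigr)$ for an interior index $j$. This vanishes exactly when $h_j$ equals the arithmetic mean of the remaining $n-1$ entries (equivalently, when $h_j=A_n$), is negative for smaller $h_j$ and positive for larger $h_j$. Hence along each coordinate $D_n$ is a \emph{valley}: strictly decreasing and then strictly increasing, with its stationary value lying in $[\underline{h},\overline{h}]$ automatically, being an average of box points. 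First I would record this monotonicity, since both bounds follow from it.

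For the lower bound, the valley shape forces the per-coordinate minimum to be interior, so the global minimum of the smooth function on the box is attained at its unique interior stationary point, characterised by $h_j=A_n$ for every interior $j$. This makes all interior entries equal; solving the resulting mean equation gives $h_j=(\underline{h}+\overline{h})/2$ and $A_n=(\underline{h}+\overline{h})/2$. Substituting into $D_n=\log A_n-\tfrac1n\sum_i\log h_i$ and collecting the $(n-2)$ equal middle terms collapses it to $\tfrac2n\bigl[\log\frac{\underline{h}+\overline{h}}{2}-\tfrac12(\log\underline{h}+\log\overline{h})\bigr]$, the lower bound in (\ref{eqn:Dn}). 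I would close the argument by noting that no boundary configuration can be a minimiser, because at any entry sitting at $\underline{h}$ or $\overline{h}$ the valley monotonicity lets one strictly decrease $D_n$ by moving that entry toward the mean of the others.

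For the upper bound, the same valley shape forces the per-coordinate maximum to sit at an endpoint, so the global maximum is attained at a vertex where every $h_i\in\{\underline{h},\overline{h}\}$. Writing $t=r/n$ for the fraction of entries equal to $\underline{h}$, the objective reduces to the one-variable function $f(t)=\log\bigl(t\underline{h}+(1-t)\overline{h}\bigr)-t\log\underline{h}-(1-t)\log\overline{h}$, which is strictly concave with $f(0)=f(1)=0$. Setting $f'(t)=0$ yields $A_n=t\underline{h}+(1-t)\overline{h}=L$, where $L:=(\overline{h}-\underline{h})/(\log\overline{h}-\log\underline{h})$ is the logarithmic mean, whence the maximiser is $t^*=(\overline{h}-L)/(\overline{h}-\underline{h})=\tfrac{\overline{h}}{\overline{h}-\underline{h}}-\tfrac1{\log\overline{h}-\log\underline{h}}$; rounding $nt^*$ to its integer part gives the stated $r^*$, the concavity of $f$ justifying the integer part. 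Substituting $t^*$ back and using the identity $\log\overline{h}-\log\underline{h}=(\overline{h}-\underline{h})/L$ to eliminate the logarithms simplifies $f(t^*)$ to $\log(L/\underline{h})+\underline{h}/L-1=-\log H+H-1$ with $H=\underline{h}/L$, exactly (\ref{H}). Since $f(t)\le f(t^*)$ for all real $t\in[0,1]$, every (necessarily integer) vertex configuration obeys $D_n\le-\log H+H-1$.

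The main obstacle I expect is not any single computation but the bookkeeping that turns the coordinate-wise valley property into a statement about the global extrema over the box: one must argue that the minimum is captured by the unique interior stationary point (boundary excluded) and that the maximum is captured by the vertices (interior excluded), for which the strict valley monotonicity and the uniqueness of the interior critical point are exactly the facts needed. The other delicate point is purely algebraic, namely verifying that $f(t^*)$ collapses to the closed form $-\log H+H-1$; this hinges on the logarithmic-mean identity $\log(\overline{h}/\underline{h})=(\overline{h}-\underline{h})/L$, which also reconciles the two symmetric expressions $\log(L/\underline{h})+\underline{h}/L-1$ and $\log(L/\overline{h})+\overline{h}/L-1$ for the same maximal value.
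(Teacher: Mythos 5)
Your proof is correct, and it rests on the same elementary computation that drives the paper's proof: the derivative of $D_n$ with respect to a single entry, $\partial D_n/\partial h_j=\frac{1}{n}\bigl(1/A_n-1/h_j\bigr)$, which vanishes exactly when that entry equals the arithmetic mean of the others (this is the paper's equation for $D_n'(a)$). For the \emph{upper} bound your route and the paper's are essentially identical: the valley shape pushes the maximizer to a vertex of the box where every $h_i\in\{\underline{h},\overline{h}\}$, and the resulting one-dimensional problem in the fraction $t=r/n$ (the paper works with $r$ itself, extended to real values) is solved by strict concavity, giving $t^*$ and the value $H-1-\log H$; your use of the logarithmic mean $L$ and the identity $\log(\overline{h}/\underline{h})=(\overline{h}-\underline{h})/L$ is a clean way to organize the algebra. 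Where you genuinely diverge is the \emph{lower} bound: the paper uses a recursive argument (following Tung), $D_n\ge\frac{n-1}{n}D_{n-1}\ge\cdots\ge\frac{2}{n}D_2$, peeling off one interior entry at a time, whereas you argue globally on the compact box: the minimum cannot sit on the boundary (valley monotonicity strictly improves any interior coordinate stuck at $\underline{h}$ or $\overline{h}$, since the mean of the remaining entries is strictly between the extremes), hence it is attained at the unique interior stationary point, where all interior entries equal $(\underline{h}+\overline{h})/2$. Your version identifies the minimizing configuration directly (the paper only asserts it "can be easily checked"), at the cost of the boundary-exclusion bookkeeping; the paper's recursion avoids boundaries entirely but reaches the minimizer less transparently. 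Both are valid.

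One detail you should not oversell, though it is a flaw you share with the paper: strict concavity of $f$ only guarantees that the best integer $r$ is $\lfloor nt^*\rfloor$ or $\lfloor nt^*\rfloor+1$, not that it is the floor, and when $nt^*$ is not an integer the value $H-1-\log H$ is approached but not attained by any admissible configuration. This does not affect the inequality $D_n\le H-1-\log H$, which is what the lemma is used for (and the paper's own comparison of $D_n(\lfloor r^*\rfloor)$ with $D_n(\lfloor r^*\rfloor+1)$ is itself stated incorrectly), but a fully precise write-up should phrase the attainment claim as holding exactly when $nt^*\in\mathbb{N}$.
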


\begin{proof}
	\begin{description}
		\item[a) Lower Bound] 
		To find the lower bound in (\ref{eqn:Dn}), we use a recursive argument similar to \cite{Tung1975}.
		Given the geometric and arithmetic means of  $n-1$ terms, $A_{n-1}$ and $G_{n-1}$, we can define the difference $D_n$ as a function of an \lq\lq additional'' term $a$:
		\begin{equation}
		\label{Dn}
		D_n(a)=\log\left[n^{-1}((n-1)A_{n-1}+a)\right]- n^{-1}\log \left(G_{n-1}^{n-1} \cdot a\right)
		\end{equation}
		then we solve
		\begin{equation}
		\label{D'}
		D_n'(a)=\frac{1}{(n-1)A_{n-1}+a}-\frac{1}{n\,a}=0
		\end{equation}
		that gives $a=A_{n-1}$. Since $D''(A_{n-1})=\frac{n-1}{n\, A_{n-1}}>0$ then $D_n\geq D_n(A_{n-1})=\frac{n-1}{n} D_{n-1}$ is the minimum.
		By applying recursively, we get:
		$$D_n \geq \frac{n-1}{n} D_{n-1}
		\geq \cdots \geq \frac{2}{n} {\rm D}_2=\frac{2}{n}\left[ \log\left(\frac{\underline{h}+\overline{h}}{2}\right)-\frac{1}{2}\log(\underline{h}\cdot \overline{h})\right].$$
		It can be easily checked that $D_n=\frac{2}{n} {\rm D}_2$ if $h_1=\underline{h}$, $h_n=\overline{h}$ and all other terms are equal to $A_2=\frac{\underline{h}+\overline{h}}{2}$.
		
		\item[b) Upper Bound]  
		From left hand-side of (\ref{D'}), we have that $D'_n(a)\geq 0$ for all $a\geq A_{n-1}$ and thus, $D_n(a)$ reaches its maximum in one of the extremes $\underline{h}$ or $\overline{h}$.
		
		Let then $r$ be an integer such that 
		$$\underline{h}=h_1=\ldots=h_{r} < h_{r+1} = \ldots= \overline{h}.$$
		We can then define the function $D_n(r)$:
		
		\[\eqalign{D_n(r)&=\log \frac{r \underline{h}+(n-r)\overline{h}}{n} - \frac{r}{n}\log \underline{h}-\frac{n-r}{n}\log\overline{h}\cr
			&= \log \frac{r +(n-r) b}{n} -\frac{n-r}{n}\log b
		}
		\]
		where $b=\overline{h}/\underline{h}$.
		
		By extending the function $D_n(r)$ to the whole interval $[1,n]$, we can solve:
		$$D_n'(r)=\frac{1-b}{r+(n-r) b}+\frac{1}{n\log b}=0,$$
		that gives the maximum point $$r^*=n\left(\frac{b}{b-1}-\frac{1}{\log b}\right),$$
		with $$D_n(r^*)=\frac{\log b}{b-1}-1-\log\frac{\log b}{b-1}=H-1-\log H,$$
		where  $H$ is defined in (\ref{H}).
		
		If $r^*$ is not integer, then the maximum of $D_n(r)$ over $[1,n]\cap \mathbb{N}$ must be attained at either $\lfloor r^* \rfloor$ or $\lfloor r^* \rfloor + 1$.
		By writing $\lfloor r^* \rfloor=n\left(\frac{b}{b-1}-\frac{1}{\log b}\right)-\mu$, where $\mu \in (0,1)$ is the fractional part and 
		by comparing $D_n(\lfloor r^* \rfloor)$ and $D_n(\lfloor r^* \rfloor+1)$, we easily see that
		$$
		D_n(\lfloor r^* \rfloor)-D_n(\lfloor r^* \rfloor+1) =\frac{b-1}{n}-\log b \geq 0$$
		which concludes the proof.
	\end{description}
\end{proof}

\noindent{\bf Theorem \ref{th:KL}.}
\begin{proof}
	The maximization of (\ref{eqn:I21}) corresponds to finding the maximum possible value for the difference between the logarithms of the arithmetic and geometric means of $h(\vett x_i)$, $i=1,\ldots,k$.
	The proof follows immediately from part b) of the proof of Lemma \ref{lem:bounds}. 
\end{proof}

\noindent{\bf Theorem \ref{KL_asimpt}.}
\begin{proof}
	By setting $b=\overline{h}/\underline{h}$ in (\ref{KL-design}) we easily obtain 
	$$\omega=\frac{b\log b -b +1}{(b-1)\log b}.$$
	Taking the limit for $\bm{\gamma}_1\to \bm{\gamma}_0$, that is, for $b\to 1$, and by applying de l'H\"opital rule twice,
	$$\eqalign{\lim_{b\to 1}\frac{b\log b -b +1}{(b-1)\log b} &= \lim_{b\to 1} \frac{\log b}{\log b +(b-1)/b} =  \lim_{b\to 1} \frac{b\log b}{b\log b +b-1}\cr &=\lim_{b\to 1} \frac{\log b+1}{\log b +2} =\frac{1}{2}}.$$
	
	In order to complete the proof, it is enough to show that the function $\omega(\bm{\gamma}_1)-1/2>0$, or equivalently, that the function $$\frac{b}{b-1}-\frac{1}{\log (1+(b-1))}-1/2>0$$ for all $b>1$. 
	By denoting $v=b-1$, we can rewrite the above inequality as
	$$\log(1+v)>\frac{2v}{v+2}.$$
	Since we already proved that the function $g(v)=\log(1+v)-\frac{2v}{v+2}\to 0$ for $v\to 0$, it is enough to show that, for $v>0$, its derivative is positive:
	$$g'(v)=\frac{1}{v+1}-\frac{2v+4-2v}{(v+2)^2}=\frac{v^2}{(v+1)(v+2)^2}>0.$$
	
\end{proof}

\noindent{\bf Theorem \ref{th:unif_cont}.}
\begin{proof}
	Let $\bm{\gamma}_1={\bm{\gamma}}_0+\bm{\lambda}/\sqrt{n}$.
For convenience, we denote the mean with respect to a design $\xi$ by $\E_\xi$ and let	
\begin{equation}
Z_{in}=Z_n(\bm{x}_i,\bm{\lambda},\bm{\gamma}_0):=\nabla h(\bm{x}_i;{\bm{\gamma}}_0)^T \frac{\bm{\lambda}}{\sqrt{n}}+ \frac{1}{2n}\bm{\lambda}^T h^{''}\!(\bm{x}_i;\bm{\bar\gamma})
		\bm{\lambda}. 
		\label{Zn}
		\end{equation}

With this notation, we can rewrite equation (\ref{eq:Taylor_exp_h}) as
\begin{equation}
\label{eq:taylor_h}
I_{12}(\xi;\bm{\gamma}_1) = 1+ \log\left(1+ \E_\xi Z_n\right) - \E_\xi \log\left(1+Z_n\right). 
\end{equation}

We clearly have that,
\begin{eqnarray*}
\sup_{\bm{x} \in \mathcal{X}} |Z_n(\bm{x},\bm{\lambda},\bm{\gamma}_0)| & \leq &
\sup_{\bm{x} \in \mathcal{X}}  \left|\nabla h(\bm{x};{\bm{\gamma}}_0)^T \frac{\bm{\lambda}}{\sqrt{n}}\right| + \sup_{\bm{x} \in \mathcal{X}} \left|  \frac{1}{2n}\bm{\lambda}^T h^{''}\!(\bm{x};\bm{\bar\gamma})
		\bm{\lambda}\right|
\end{eqnarray*}

Now, from the Cauchy-Shwartz inequality and from the assumptions of the Theorem:
\begin{equation}
|\nabla h(\bm{x};{\bm{\gamma}}_0)^T \bm{\lambda}| \leq \|\nabla h(\bm{x};{\bm{\gamma}}_0)^T\| \cdot \|\bm{\lambda}\|\leq M_1 L
\label{ineq:1}
 \end{equation}
and, by the definition of the spectral norm:
\begin{eqnarray}
|\bm{\lambda}^T h^{''}\!(\bm{x};\bm{\bar\gamma})\bm{\lambda}| &\leq& \|\bm{\lambda}\| \|h^{''}\!(\bm{x};\bm{\bar\gamma})\bm{\lambda}\| \leq \|\bm{\lambda}\|^2 \sup_{\lambda \not=0} \frac{\|h^{''}\!(\bm{x};\bm{\bar\gamma})\bm{\lambda}\|}{\|\bm\lambda\|}
\nonumber
\\
&=& \|\bm{\lambda}\|^2  \|h^{''}\!(\bm{x};\bm{\bar\gamma})\|_2\leq M_2 L^2. 
\label{ineq:2}
\end{eqnarray}

Thus, we get the upper bound:
\begin{eqnarray*}
\sup_{\bm{x} \in \mathcal{X}} |Z_n(\bm{x},\bm{\lambda},\bm{\gamma}_0)| & \leq &
\frac{M_1   L}{\sqrt{n}}
 + \frac{M_2 L^2}{2n} 
\end{eqnarray*}
 where the constants $L,M_1,M_2$ do not depend on $\xi$ nor on $n$.
 
 It is therefore possible to find an integer $n^*:=n^*(L,M_1,M_2)$ such that, for all $n\geq n^*$: 
 \begin{equation}\label{nstar}
\max\left\{ \frac{M_1  L}{\sqrt{n}},\,\frac{ M_2 L^2}{2n}  \right\}< \frac{1}{4}
 \end{equation}

Thus, for $n\geq n^*$, we have that $\sup_{\bm{x} \in \mathcal{X}} |Z_n(\bm{x},\bm{\lambda},\bm{\gamma}_0)|<\frac{1}{2}$. As a consequence,  $Z_n<\frac{1}{2}$ and,  replacing each term of the expected value $\E_\xi |Z_n|$ with its maximum value, $$
\E_\xi Z_n \leq \E_\xi |Z_n| \leq \sup_{\bm{x} \in \mathcal{X}} |Z_n(\bm{x},\bm{\lambda},\bm{\gamma}_0)|<\frac{1}{2}. $$
This enables us to expand in infinite series both the terms $\log(1+\E_\xi Z_n)$ and $\log(1+Z_{n})$ in (\ref{eq:taylor_h}), obtaining:
	\begin{eqnarray*}
		I_{12}(\xi;\bm{\gamma}_1) 
		&=& 1+ \left[ \sum_{i=1}^k  \left(\nabla h(\bm{x}_i;{\bm{\gamma}}_0)^T \frac{\bm{\lambda}}{\sqrt{n}}+ \frac{1}{2n}\bm{\lambda}^T h^{''}\!(\bm{x}_i;\bm{\bar\gamma})
		\bm{\lambda} \right)\!\xi(\bm{x}_i) \right] \\
		&& \quad -\frac{1}{2} \left[ \sum_{i=1}^k  \left(\nabla h(\bm{x}_i;{\bm{\gamma}}_0)^T \frac{\bm{\lambda}}{\sqrt{n}}+ \frac{1}{2n}\bm{\lambda}^T h^{''}\!(\bm{x}_i;\bm{\bar\gamma})
		\bm{\lambda} \right)\!\xi(\bm{x}_i) \right]^2 \\
		&& \quad - \sum_{i=1}^k  \left[  \left(\nabla h(\bm{x}_i;{\bm{\gamma}}_0)^T \frac{\bm{\lambda}}{\sqrt{n}}+ \frac{1}{2n}\bm{\lambda}^T h^{''}\!(\bm{x}_i;\bm{\bar\gamma})
		\bm{\lambda} \right)\right]\!\xi(\bm{x}_i) \\
		&& + \frac{1}{2}  \sum_{i=1}^k  \left[ \left(\nabla h(\bm{x}_i;{\bm{\gamma}}_0)^T \frac{\bm{\lambda}}{\sqrt{n}}+ \frac{1}{2n}\bm{\lambda}^T h^{''}\!(\bm{x}_i;\bm{\bar\gamma})
		\bm{\lambda} \right)\right]^2 \!\xi(\bm{x}_i) \\
		&& + \sum_{m=3}^{\infty} \, \frac{(-1)^{m+1}}{m} [(\E_\xi Z_n)^m-\E_\xi Z_n^m]
		\end{eqnarray*}	

The first terms of the two expansions simplify, and after some calculations, we obtain, as in Theorem \ref{th:3}:
$$
		I_{12}(\xi;\bm{\gamma}_1) =
		1 + \frac{\zeta(\xi,\bm{\lambda},\bm{\gamma}_0)}{n} + O(n^{-3/2}),
		$$
where the $O(n^{-3/2})$ term is now computed exactly:		
\begin{eqnarray*}O(n^{-3/2})	\!\!	&=& \!
		\frac{1}{2} \left[\sum_i \frac{1}{4n^2}\left(\bm{\lambda}^T h^{''}\!(\bm{x}_i;\bm{\bar\gamma})
		\bm{\lambda} \right)^2 \xi(\bm{x}_i)  - \frac{1}{4n^2} \left(\sum_i \bm{\lambda}^T h^{''}\!(\bm{x}_i;\bm{\bar\gamma})
		\bm{\lambda}  \xi(\bm{x}_i)\right)^2 \right]\\
		&& \quad -\frac{1}{2n} \left[\left(\sum_i \nabla h(\bm{x}_i;{\bm{\gamma}}_0)^T \frac{\bm{\lambda}}{\sqrt{n}}\xi(\bm{x}_i)\right) \left( \sum_i\bm{\lambda}^T h^{''}\!(\bm{x}_i;		  \bm{\bar\gamma})\bm{\lambda} \xi(\bm{x}_i)\right) + \right. \\
		&& \qquad \quad \left.- \left( \sum_i \nabla h(\bm{x}_i;{\bm{\gamma}}_0)^T \frac{\bm{\lambda}}{\sqrt{n}}  \bm{\lambda}^T h^{''}\!(\bm{x}_i;\bm{\bar\gamma})
		\bm{\lambda} \xi(\bm{x}_i) \right) \right]\\
		&& \quad + \sum_{m=3}^{\infty} \frac{(-1)^{m+1}}{m}\, [(\E_\xi Z_n)^m-\E_\xi Z_n^m]
		\end{eqnarray*}
		
Let $\operatorname{Var}_\xi f(\bm{X})$ and $\operatorname{Cov}_\xi (f(\bm{X}),g(\bm{X}))$ denote variances and covariances with respect to the design $\xi$, respectively. We can then write:
\begin{eqnarray*}
&&  n [ I_{12}(\xi;\bm{\gamma}_1)-1] - \zeta(\xi,\bm{\lambda},\bm{\gamma}_0)  =\\
&&\quad\qquad = \frac{1}{8n} \operatorname{Var}_{\xi} \left(\bm{\lambda}^T h^{''}\!(\bm{X};\bm{\bar\gamma})\bm{\lambda}\right) +\frac{1}{2\sqrt{n}} \operatorname{Cov}_\xi\left(\bm{\lambda}^T h^{''}\!(\bm{X};\bm{\bar\gamma})\bm{\lambda}, \nabla h(\bm{X};{\bm{\gamma}}_0)^T\bm{\lambda} \right) \\
&& \qquad\qquad \qquad + n  \sum_{m=3}^{\infty} \frac{(-1)^{m+1}}{m} \left[(\E_\xi Z_n)^m-\E_\xi Z_n^m\right]
\end{eqnarray*}
		
Thus, using the Cauchy-Shwartz bound $\operatorname{Cov}_\xi (f,g) \leq \sqrt{\operatorname{Var}_\xi(f) \operatorname{Var}_\xi (g)}$,  
\begin{eqnarray}\label{eq:unif_cont}
&& \left| n[ I_{12}(\xi;\bm{\gamma}_1)-1] - \zeta(\xi,\bm{\lambda},\bm{\gamma}_0) \right| \leq  
\frac{1}{8n} \operatorname{Var}_{\xi} \left(\bm{\lambda}^T h^{''}\!(\bm{X};\bm{\bar\gamma})\bm{\lambda}\right) \nonumber \\
&& \qquad\qquad\qquad \qquad + 
\frac{1}{2\sqrt{n}} \sqrt{\operatorname{Var}_\xi\left(\bm{\lambda}^T h^{''}\!(\bm{X};\bm{\bar\gamma})\bm{\lambda}\right) \operatorname{Var}_\xi\left(\nabla h(\bm{X};{\bm{\gamma}}_0)^T\bm{\lambda} \right)} \nonumber \\
&& \qquad\qquad\qquad \qquad +  \sum_{m=3}^{\infty}\frac{n}{m} \left| (\E_\xi Z_n)^m-\E_\xi Z_n^m \right|.
\end{eqnarray}

We need to prove that the right-hand side of (\ref{eq:unif_cont}) is bounded from above by a quantity that is inversely proportional to $n$ and independent of $\xi$.

Let us start with 
\begin{equation}
\sum_{m=3}^{\infty}\frac{n}{m} \,\left| (\E_\xi Z_n)^m-\E_\xi Z_n^m \right|\leq \sum_{m=3}^{\infty}\frac{n}{m} \left[ (\E_\xi |Z_n|)^m+\E_\xi |Z_n|^m\right]\leq \sum_{m=3}^{\infty}  \frac{2n}{m}\, \E_\xi |Z_n|^m,
\label{ineq:Zn}
\end{equation}
and taking into account the expression of $Z_n$ given in (\ref{Zn}), we have that
\begin{eqnarray}
\sum_{m=3}^{\infty} \frac{2n}{m} \, \E_\xi \left| Z_n\right|^m
&\leq&
\sum_{m=3}^{\infty} \frac{2n}{m} \,\E_\xi \left(
\left|\nabla h(\bm{X};{\bm{\gamma}}_0)^T \frac{\bm{\lambda}}{\sqrt{n}}\right|+ \left| \frac{1}{2n}\bm{\lambda}^T h^{''}\!(\bm{X};\bm{\bar\gamma})
\bm{\lambda}\right|\right)^m
\nonumber
\\
 &\leq& \!\!\!\!\sum_{m=3}^{\infty} \!\frac{2n}{m}\! \left[\! 2^m\, \E_\xi\!\left|\nabla h(\bm{x}_i;{\bm{\gamma}}_0)^T \frac{\bm{\lambda}}{\sqrt{n}}\right|^m \!\!\!\!+\! 
	2^m\,\E_\xi \!\left|\frac{1}{2n}\bm{\lambda}^T h^{''}\!(\bm{x}_i;\bm{\bar\gamma})\bm{\lambda}\right|^m 
	\right],\nonumber\\
	\label{ineq:3}
\end{eqnarray}
where the last inequality follows from the fact that,
 given $a, b\in \Real$ and $m \in \mathbb{N}$,
$$(|a|+|b|)^m = \sum_{j=0}^m {m\choose j} |a|^j |b|^{m-j} \leq 2^m \max\{|a|^m,|b|^m \}\leq 2^m(|a|^m+|b|^m),$$
with $a = \nabla h(\bm{X};{\bm{\gamma}}_0)^T \frac{\bm{\lambda}}{\sqrt{n}}$ and $b= \frac{1}{2n}\bm{\lambda}^T h^{''}\!(\bm{X};\bm{\bar\gamma})
		\bm{\lambda}$.
		
In addition, in  the right-hand side of (\ref{ineq:3})  $2/m<1$ as in the summation $m>2$, and thus:
\begin{eqnarray}
&& \sum_{m=3}^{\infty} \frac{2n}{m} \left[ 2^m\, \E_\xi\left|\nabla h(\bm{x}_i;{\bm{\gamma}}_0)^T \frac{\bm{\lambda}}{\sqrt{n}}\right|^m + 
2^m\,\E_\xi \left|\frac{1}{2n}\bm{\lambda}^T h^{''}\!(\bm{x}_i;\bm{\bar\gamma})\bm{\lambda}\right|^m 
 \right] \nonumber\\
&& \quad \leq \sum_{m=3}^{\infty} n \left[ \E_\xi\left|2 \nabla h(\bm{x}_i;{\bm{\gamma}}_0)^T \frac{\bm{\lambda}}{\sqrt{n}}\right|^m + 
\E_\xi \left|\frac{1}{n}\bm{\lambda}^T h^{''}\!(\bm{x}_i;\bm{\bar\gamma})\bm{\lambda}\right|^m 
 \right] \nonumber\\
 && \quad\leq 
 \sum_{m=3}^\infty  n \left[\left(\frac{2M_1   L}{\sqrt{n}} \right)^m+ \left(\frac{ M_2 L^2}{n}\right)^m\right],
 \label{ineq:4}
\end{eqnarray}
where the last inequality follows from the assumptions of the Theorem:
\begin{eqnarray*}
&& \E_\xi\left|2 \nabla h(\bm{x}_i;{\bm{\gamma}}_0)^T \frac{\bm{\lambda}}{\sqrt{n}}\right|^m
 \leq 
\sup_{\bm{x} \in \mathcal{X}} \left|2 \nabla h(\bm{x};{\bm{\gamma}}_0)^T \frac{\bm{\lambda}}{\sqrt{n}}\right|^m\\
&& \quad =
\left[\sup_{\bm{x} \in \mathcal{X}} \left|2 \nabla h(\bm{x};{\bm{\gamma}}_0)^T \frac{\bm{\lambda}}{\sqrt{n}}\right|\right]^m
\leq \left(\frac{2M_1  L}{\sqrt{n}} \right)^m 
\end{eqnarray*}
and 
$$ \E_\xi \!\left|\frac{1}{n}\bm{\lambda}^T h^{''}\!(\bm{x}_i;\bm{\bar\gamma})\bm{\lambda}\right|^m
\!\!\!\!\leq 
\sup_{\bm{x} \in \mathcal{X}} \left|\frac{1}{n}\bm{\lambda}^T h^{''}\!(\bm{x};\bm{\bar\gamma})\bm{\lambda}\right|^m
\!\!\!\!=\!\!
\left( \!\sup_{\bm{x} \in \mathcal{X}} \left|\frac{1}{n}\bm{\lambda}^T h^{''}\!(\bm{x};\bm{\bar\gamma})\bm{\lambda}\right| \right)^{\!\!\!m}
\!\!\! \leq \!\!
 \left(\!\frac{ M_2 L^2}{n} \!\right)^{\!\!\!m}\!\!.$$
 In addition,
if $n>n^*$ then (\ref{nstar}) is fulfilled and thus  $2 M_1 L/\sqrt{n}<1$ and $M_2 L^2/n<1$. As a consequence, the two power series in (\ref{ineq:4}) converge, and from (\ref{ineq:Zn})-(\ref{ineq:4}) we obtain:
\begin{eqnarray}
&& \sum_{m=3}^{\infty} \frac{n}{m} \left|(\E_\xi Z)^m-\E_\xi Z^m\right|  
\leq \sum_{m=3}^\infty  n \left[\left(\frac{2M_1 L}{\sqrt{n}} \right)^m+ \left(\frac{M_2 L^2}{n}\right)^m\right]\nonumber\\
&& \qquad = n\left[\left(1-\frac{2M_1   L}{\sqrt{n}}\right)^{-1} -1-\frac{2M_1   L}{\sqrt{n}}-\left(\frac{2M_1   L}{\sqrt{n}}\right)^2 +\right.\nonumber\\
&&\qquad \qquad  + \left.\left(1-\frac{M_2 L^2}{n}\right)^{-1}-1- \frac{ M_2 L^2}{n}-\left(\frac{ M_2 L^2}{n}\right)^2 \right]
\label{ineq:5}
\end{eqnarray}

In particular, from (\ref{nstar}) we have that $1-2M_1   L/\sqrt{n}>1/2$, and thus: 
$$\left(1-\frac{2M_1   L}{\sqrt{n}}\right)^{-1} -1-\frac{2M_1   L}{\sqrt{n}}-\left(\frac{2M_1   L}{\sqrt{n}}\right)^2 = 
\frac{\left(2M_1   L/\sqrt{n}\right)^3}{1-2M_1   L/\sqrt{n}}\leq \frac{16 M_1^3 L^3}{n^{3/2}}.
$$
Similarly, the upper bound in (\ref{nstar}) guarantees that:
$$\left(1-\frac{ M_2 L^2}{n}\right)^{-1}-1- \frac{M_2 L^2}{n}-\left(\frac{M_2 L^2}{n}\right)^2 \leq 
\frac{\left(M_2   L^2/n\right)^3}{1-M_2   L^2/n}\leq \frac{2 M_2^3 L^6}{n^{3}}$$

Using these two upper-bounds in (\ref{ineq:5}), we have:
\begin{equation}\label{eq:bound1}
\sum_{m=3}^{\infty} \frac{n}{m} \left|(\E_\xi Z)^m-\E_\xi Z^m\right|\leq  \frac{16 M_1^3 L^3}{n^{1/2}}+\frac{2 M_2^3 L^6}{n^{2}}.\end{equation}

It then remains to consider the other terms in the right-hand side of (\ref{eq:unif_cont}). 
Using the trivial upper bound $\operatorname{Var}(X)\leq \E X^2$ and arguments similar to those used above,
$$\operatorname{Var}_{\xi} \left(\bm{\lambda}^T h^{''}\!(\bm{X};\bm{\bar\gamma})\bm{\lambda}\right) \leq \E_\xi \left|\bm{\lambda}^T h^{''}\!(\bm{X};\bm{\bar\gamma})\bm{\lambda} \right|^2 \leq |\bm{\lambda}^T\bm{\lambda}|^2 \E_\xi\|h^{''}\!(\bm{X};\bm{\bar\gamma})\|_2^2 \leq L^4 M_2^2$$
and $$\operatorname{Var}_\xi \left(\nabla h(\bm{X};{\bm{\gamma}}_0)^T\bm{\lambda} \right) \leq \E_\xi \left(\nabla h(\bm{X};{\bm{\gamma}}_0)^T\bm{\lambda} \right)^2 \leq \E_\xi \|\nabla h(\bm{X};{\bm{\gamma}}_0)\|^2 \|\bm{\lambda}\|^2 \leq L^2 M_1^2. $$

Thus, putting everything together, we get:
$$ \left| n ( I_{12}(\xi;\bm{\gamma}_1)-1) - \zeta(\xi,\bm{\lambda},\bm{\gamma}_0) \right| \leq  \frac{16L^3 M_1^3}{\sqrt{n}} + \frac{2L^6 M_2^3}{n^2} + \frac{L^4 M_2^2}{8n}+ \frac{L^3 M_1 M_2}{2\sqrt{n}}$$
where the constants $K,M_1$ and $M_2$ are independent of $\xi$.

\end{proof}

\bibliographystyle{plainnat}
\bibliography{references}

\end{document}